\newtheorem{theorem}{Theorem}
\newtheorem{lemma}[theorem]{Lemma}
\newtheorem{corollary}[theorem]{Corollary}
\title{How to burn a graph}
\author{Anthony Bonato}
\address{Department of Mathematics\\
Ryerson University\\
Toronto, ON\\
Canada, M5B 2K3} \email{abonato@ryerson.ca}
\author{Jeannette Janssen}
\address{Department of Mathematics and Statistics\\
Dalhousie University\\
Halifax, NS\\
Canada, B3H 3J5}
\email{jeannette.janssen@dal.ca}
\author{Elham Roshanbin}
\address{Department of Mathematics and Statistics\\
Dalhousie University\\
Halifax, NS\\
Canada, B3H 3J5}
\email{e.roshanbin@Dal.Ca}
\thanks{Supported by grants from NSERC}
\begin{document}

\maketitle

\begin{abstract}
We introduce a new graph parameter called the burning number, inspired by contact processes on graphs such as graph bootstrap percolation, and graph searching paradigms such as Firefighter. The
burning number measures the speed of the spread of contagion in a graph; the lower the burning number, the faster the contagion spreads. We provide a number of properties of the burning number,
including characterizations and bounds. The burning number is computed for several graph classes, and is derived for the graphs generated by the Iterated Local Transitivity model for social
networks.
\end{abstract}

\section{Introduction}

The spread of social influence is an active topic in social network analysis; see, for example, \cite{shak,dr,kkt,ktt1,mr,rd}. A recent study on the spread of emotional contagion in
Facebook~\cite{kramer} has highlighted the fact that the underlying network is an essential factor; in particular, in-person interaction and nonverbal cues are not necessary for the spread of the
contagion. Hence, agents in the network spread the contagion to their friends or followers, and the contagion propagates over time. If the goal was to minimize the time it took for the contagion to
reach the entire network, then which agents would you target with the contagion, and in which order?

As a simplified, deterministic approach to these questions, we consider a new approach involving a graph process which we call \emph{burning}. Burning is inspired by graph theoretic processes like
Firefighting~\cite{winkler,bonato,fmac}, graph cleaning \cite{APW}, and graph bootstrap percolation~\cite{bbm}. There are discrete time-steps or rounds. Each node is either \emph{burned} or
\emph{unburned}; if a node is burned, then it remains in that state until the end of the process. In every round, we choose one additional unburned node to burn (if such a node is available). Once a
node is burned in round $t$, in round $t+1$, each of its unburned neighbours becomes burned. The process ends when all nodes are burned.  The \emph{burning number} of a graph $G$, written by $b(G)$,
is the minimum number of rounds needed for the process to end. For example, it is straightforward to see that $b(K_n) = 2$. However, even for a relatively simple graph such as the path $P_n$ on $n$
nodes, computing the burning number is more complex; in fact, $b(P_n) = \lceil n^{1/2}\rceil$ as stated below in Theorem~\ref{path}.

Burning may be viewed as a simplified model for the spread of social contagion in a social network such as Facebook or Twitter. The lower the value of $b(G)$, the easier it is to spread such
contagion in the graph $G.$ Suppose that in the process of burning a graph $G$, we eventually burned the whole graph $G$ in $k$ steps, and for each $i$, $1\leq i \leq k$, we denote the node that we
burn in the $i$-th step by $x_i$. We call such a node simply a \emph{source of fire}. The sequence $(x_1, x_2, \ldots, x_k)$ is called a \emph{burning sequence} for $G$. With this notation, the
burning number of $G$ is the length of a shortest burning sequence for $G$; such a burning sequence is referred to as \emph{optimal}. For example, for the path $P_4$ with node set $\{v_1, v_2, v_3,
v_4\}$, the sequence $(v_2, v_4)$ is an optimal burning sequence; see Figure~\ref{p4}. Note that for a graph $G$ with at least two nodes, we have that $b(G)\geq 2$.
\begin{figure}[hi]
\begin{center}
\epsfig{figure=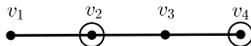,width=1.4in,height=.32in}
\caption{Burning the path $P_4$ (the open circles represent burned nodes).} \label{p4}
\end{center}
\end{figure}

The goal of the current paper is to introduce the burning number and explore its core properties. A characterization of burning number via a decomposition into trees is given in Theorem~\ref{decomp}.
As proven in \cite{bjr}, computing the burning number of a graph is \textbf{NP}-complete, even for trees. As such, we provide bounds on the burning number in terms of spanning trees, which are useful
in many cases when computing the burning number. See Lemma~\ref{rad} for bounds on the burning number. We compute the burning number on the Iterated Local Transitivity model for social networks
(introduced in \cite{anppc}); see Theorem~\ref{ilttt}. In the final section, we summarize our results and present open problems for future work.

\section{Properties of the burning number}

In this section, we collect a number of results on the burning number, ranging from characterizations, bounds, to computing the burning number on certain kinds of graphs. We first need some
terminology. If $G$ is a graph and $v$ is a node of $G$, then the \emph{eccentricity} of $v$ is defined as $\max\{d(v, u): u\in V(G)\}$. The \emph{radius} of $G$ is the minimum eccentricity over the
set of all nodes in $G$. The \emph{center} of $G$ consists of the nodes in $G$ with minimum eccentricity. Given a positive integer $k$, the \emph{$k$-th closed neighborhood} of $v$ is defined to be
the set $\{u\in V(G) : d(u, v)\leq k\}$ and is denoted by $N_k[v]$; we denote $N_1[v]$ simply by $N[v]$. We sometimes use the notation $N^G_k[v]$ to emphasize that we consider the $k$-th closed
neighbourhood of node $v$ in a specified graph $G$.

Suppose that $(x_1, x_2 \ldots, x_k)$ is a burning sequence for a given graph $G$. For $1\leq i\leq k$, the fire started at $x_i$ will burn only all the nodes within distance
$k-i$ from $x_i$ by the end of the $k$-th step. On the other hand, every node $v\in V(G)$ must be either a source of fire, or burned from at least one of the sources of fire by the end of the $k$-th
step. In other words, any node of $G$ must be an element of $N_{k-i}[x_i]$, for some $1\leq i\leq k$.
Moreover, for each pair $i$ and $j$, with $1\leq i < j\leq k$, we must have $d(x_i, x_j) \geq j-i$. Since, otherwise, if $d(x_i, x_j) = l < j-i$, then $x_j$ will be burned at stage $l + i$ ($<
j$), which is a contradiction.
Therefore, we can see that $(x_1, x_2, \ldots, x_k)$ forms a burning
sequence for $G$ if and only if, for each pair $i$ and $j$, with $1\leq i < j\leq k$, $d(x_i, x_j) \geq j-i$, and the following set equation holds:
\begin{equation}
N_{k-1}[x_1] \cup N_{k-2}[x_2]\cup \ldots \cup N_0[x_k] = V(G).\label{eqq}
\end{equation}
A \emph{covering} of $G$ is a set of subsets of the nodes of $G$ whose union is $V(G).$  The above observation, shows that the burning problem is basically a covering problem using closed
neighbourhoods with a restriction on their radius. Hence, it seems that by finding a covering for a graph $G$ using a limited number of connected subgraphs with restricted
radius, we may find a bound on the burning number of $G$, as the following theorem shows.

\begin{theorem}\label{corr1}
If in a graph $G$ there exists a collection of connected subgraphs $\{C_1, C_2,\ldots, C_t\}$, each of radius at most $k$, which cover all the nodes of $G$, then $b(G)\leq t + k$.
\end{theorem}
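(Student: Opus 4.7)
The plan is to exhibit a valid burning sequence of length $t+k$ directly. For each $i\in\{1,\dots,t\}$, fix a center $c_i$ of $C_i$. Since $C_i$ is a connected subgraph of $G$ of radius at most $k$, every path in $C_i$ is also a path in $G$, so $d_G(c_i,v)\le d_{C_i}(c_i,v)\le k$ for all $v\in V(C_i)$; in particular $V(C_i)\subseteq N^G_k[c_i]$. The idea is then to "light" each $c_i$ at round $i$ and allow $k$ extra rounds for the resulting fires to cover their subgraphs.

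Concretely, I would build $(x_1,\dots,x_{t+k})$ greedily round by round: at round $i\le t$, set $x_i=c_i$ if $c_i$ is still unburned at that moment, and otherwise let $x_i$ be any unburned node; at rounds $t+1,\dots,t+k$, let $x_i$ be any unburned node (the process simply terminating early if $V(G)$ is already burned). With this construction, the distance condition required by~(\ref{eqq}) is automatic: when $x_j$ is chosen at round $j$ it is by definition unburned at that time, so no earlier source $x_i$ has reached it yet, which forces $d_G(x_i,x_j)\ge j-i$ for every $i<j$.

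It remains to verify the covering condition, namely $\bigcup_{i=1}^{t+k} N_{t+k-i}[x_i]=V(G)$. Given $v\in V(G)$, choose any $i$ with $v\in V(C_i)$ and split into two cases. In the first case $x_i=c_i$, so $d_G(x_i,v)\le k\le t+k-i$ and $v\in N_{t+k-i}[x_i]$. In the second case $c_i$ is already burned at the start of round $i$, so some earlier source $x_j$ (with $j<i$) satisfies $d_G(x_j,c_i)\le (i-1)-j$; the triangle inequality then yields
\[
 d_G(x_j,v)\le d_G(x_j,c_i)+d_G(c_i,v)\le (i-1-j)+k\le (t+k)-j-1,
\]
and again $v\in N_{t+k-j}[x_j]$.

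I do not expect a serious obstacle. The one thing to handle with care is the off-by-one in case two: the fact that $c_i$ is burned at the \emph{start} of round $i$ only guarantees that some earlier $x_j$ is within distance $(i-1)-j$ of $c_i$ (not $i-j$), and it is precisely this savings of one, combined with the $k$ rounds of slack appended after step $t$, that makes the triangle inequality bound fit inside the $(t+k-j)$ reach allotted to $x_j$.
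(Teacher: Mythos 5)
Your proof is correct and follows essentially the same strategy as the paper's: burn a center of each $C_i$ during the first $t$ rounds (substituting an arbitrary unburned node when that center is already on fire) and then allow up to $k$ further rounds for the fires to fill out the radius-$k$ subgraphs. The only difference is presentational — you verify the covering condition of equation~(\ref{eqq}) explicitly via the triangle inequality, whereas the paper argues directly that every $C_i$ acquires a burned center within $t'\le t$ steps and hence is consumed within $k'\le k$ more; your case-two bookkeeping works even under the convention that fire spreads before the new source is chosen in a round, since $d_G(x_j,c_i)\le i-j$ together with $i\le t$ still gives $d_G(x_j,v)\le (t+k)-j$.
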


\begin{proof}
It suffices to show that for any minimal covering of $G$ (that is a covering which does not contain any proper subcover for $G$) such as $\{C_1, C_2,\ldots, C_t\}$ by nonempty connected subgraphs
$C_i$'s of radius at most $k$, that $b(G)\leq t + k$. We define a burning sequence $(x_1, x_2, \ldots, x_{t' + k'})$, with $t'\leq t$ and $k'\leq k$, for $G$ as follows. Let $x_1$ be a center of the
induced subgraph $G[C_1]$. Then for $i\geq 2$, we let $x_i$ be a central node in $C_j$, with $j\geq i$, if none of the central nodes of $C_j$ are burned before the $i$-th step, where $j$ is the
smallest index that satisfies this condition. We continue to choose $x_i$'s by the above rule until at some step $t'\leq t$, by burning $x_1, x_2, \ldots, x_{t'}$, each $C_i$, $1\leq i\leq t$
contains a burned center.

Now, for $j\geq 1$, we choose $x_{t' + j}$ to be a node in $G$ that is not burned before the $(t' + j)$-th step. Since the radius of each $C_i$ is at most $k$, after $k'\leq k$ steps every node in
$G$ must be burned. Thus, $b(G)\leq t' + k'\leq t + k$.
\end{proof}

We present another bound for the burning number of a graph using coverings. The proof is analogous to the one of Theorem~\ref{corr1}, and so is omitted.
\begin{theorem}\label{cov2}
If $\{C_1, C_2,\ldots, C_t\}$ is a covering of the nodes of a graph $G$, where each $C_i$ is a connected subgraph of radius at most $k-i$, and $t\leq k$, then $b(G)\leq k$.
\end{theorem}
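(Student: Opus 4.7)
The plan is to mirror the construction in the proof of Theorem~\ref{corr1}, while exploiting the fact that the allowed radii now decrease with the index: $C_i$ has radius at most $k-i$. A fire lit at step $i$ spreads to distance $k-i$ by the final step $k$, which suffices to cover $C_j$ only when $j \geq i$, so I would be careful about which subgraph's center to burn at each step.

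I would define the burning sequence greedily. At step $i$, let
\begin{equation*}
J_i = \{j : i \leq j \leq t, \text{ no center of } C_j \text{ has been burned before step } i\}.
\end{equation*}
If $J_i \neq \emptyset$, I would set $j_i = \min J_i$ and take $x_i$ to be a center of $C_{j_i}$; otherwise, I would take $x_i$ to be any unburned node (or stop if none remains). By construction each $x_i$ is unburned when chosen, so the distance constraint $d(x_i, x_l) \geq l-i$ for $i < l$ is automatic, and a valid burning sequence of length at most $k$ is obtained.

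The remaining task is to verify that every vertex is burned by step $k$. Fix $v \in C_j$. In the easy case, some $l \leq j$ satisfies $j_l = j$; then $x_l$ is a center of $C_j$, so $d(x_l, v) \leq r_j \leq k-j \leq k-l$, and $v$ is burned in time. The harder case is when no such $l$ exists: then $j \notin J_j$ (otherwise $j_j = \min J_j = j$), so some center $c$ of $C_j$ must have been burned at a step $s < j$ by a fire $x_l$ with $l \leq s$ and $d(x_l, c) = s-l$. The triangle inequality combined with $r_j \leq k-j$ then yields
\begin{equation*}
d(x_l, v) \leq d(x_l, c) + d(c, v) \leq (s-l) + (k-j) \leq (j-1-l) + (k-j) = k-l-1,
\end{equation*}
so $v$ is again burned in time. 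The main delicacy lies in this second case: the argument needs to notice that whenever the greedy rule fails to produce a fresh center for $C_j$, the fire responsible for igniting $C_j$'s center is already close enough that its spread still covers all of $C_j$ within the $k$-round budget---which is exactly where the hypothesis $r_j \leq k-j$ is used.
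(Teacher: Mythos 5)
Your proposal is correct and is essentially the proof the paper has in mind: the paper omits the proof of Theorem~\ref{cov2} as "analogous to Theorem~\ref{corr1}," and your greedy rule of burning a center of the smallest-indexed not-yet-ignited $C_j$ with $j\geq i$, together with the verification that a center of each $C_j$ is burned by step $j$ so that the radius bound $k-j$ finishes the job by step $k$, is exactly that analogue. The case analysis and the distance estimate $d(x_l,v)\leq (s-l)+(k-j)\leq k-l$ correctly fill in the details the paper leaves out.
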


We have the following immediate corollary.

\begin{corollary}\label{cov3}
If $(x_1, x_2, \ldots, x_k)$ is a sequence of nodes in a graph $G$, such that $N_{k-1}[x_1] \cup N_{k-2}[x_2]\cup \ldots \cup N_0[x_k] = V(G)$, then $b(G) \leq k$.
\end{corollary}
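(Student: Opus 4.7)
The plan is to deduce Corollary~\ref{cov3} directly from Theorem~\ref{cov2}. The hypothesis of the corollary gives a covering of $V(G)$ by the closed neighbourhood sets $N_{k-i}[x_i]$ for $1\le i \le k$, which looks at first glance like exactly what Theorem~\ref{cov2} requires. The only thing I have to supply is to promote each node set $N_{k-i}[x_i]$ to an actual connected subgraph $C_i$ of $G$ whose radius is at most $k-i$.

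The construction is the obvious one: for each $i$, and for each node $v \in N_{k-i}[x_i]$, fix a shortest $x_i$--$v$ path $P_{i,v}$ in $G$ (of length at most $k-i$), and let $C_i = \bigcup_{v\in N_{k-i}[x_i]} P_{i,v}$. Then $C_i$ is a connected subgraph of $G$ that contains $x_i$, and since every vertex of $C_i$ lies on some shortest path from $x_i$ of length at most $k-i$, the distance from $x_i$ to any vertex of $C_i$, measured within $C_i$, is at most $k-i$. Hence $x_i$ is a center of $C_i$ and the radius of $C_i$ is at most $k-i$. By construction, the vertex set of $C_i$ contains $N_{k-i}[x_i]$, so the covering hypothesis gives
\begin{equation*}
V(C_1)\cup V(C_2)\cup \cdots \cup V(C_k) \supseteq N_{k-1}[x_1]\cup N_{k-2}[x_2]\cup \cdots \cup N_0[x_k] = V(G).
\end{equation*}
With $t = k$, the collection $\{C_1,\ldots,C_k\}$ now satisfies the hypothesis of Theorem~\ref{cov2}, which yields $b(G)\le k$.

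The only subtle point, and hence the main (minor) obstacle, is to avoid the naive choice of $C_i$ as the subgraph of $G$ induced on $N_{k-i}[x_i]$, since distances in that induced subgraph may exceed $k-i$ and the radius bound could fail. Forming $C_i$ from shortest paths in $G$ circumvents this because each such shortest path remains a shortest path inside $C_i$. Beyond this, the proof is purely a reduction to Theorem~\ref{cov2} and requires no further work.
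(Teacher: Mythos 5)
Your proof is correct and follows the same route as the paper, which simply sets $C_i = N_{k-i}[x_i]$ and invokes Theorem~\ref{cov2}. One remark: the ``subtle point'' you raise is actually a non-issue, because every shortest $x_i$--$v$ path in $G$ with $v \in N_{k-i}[x_i]$ lies entirely inside $N_{k-i}[x_i]$ (each of its vertices is at distance at most $d(x_i,v) \le k-i$ from $x_i$), so the induced subgraph on $N_{k-i}[x_i]$ already has radius at most $k-i$ with $x_i$ as a center; your union-of-shortest-paths construction is a harmless but unnecessary refinement.
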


\begin{proof}
Set $C_i = N_{k-i}[x_i]$, for $1\leq i\leq k$, and apply Theorem \ref{cov2}.
\end{proof}

Another immediate corollary is the following.

\begin{corollary}\label{cone}
Suppose that $(x_1, x_2, \ldots, x_k)$ is a burning sequence for a graph $G$. If for some node $x\in V(G) \setminus \{x_1, \ldots, x_k\}$ and  $1\leq j \leq k-1$, we have that $N[x] \subseteq
N[x_j]$, and for every $i\neq j$, $d(x,x_i)\geq |i-j|$, then $(x_1, \ldots, x_{j-1}, x, x_{j+1}, \ldots, x_k)$ is also a burning sequence for $G$.
\end{corollary}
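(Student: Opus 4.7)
My plan is to verify directly that the proposed sequence $(x_1, \ldots, x_{j-1}, x, x_{j+1}, \ldots, x_k)$ meets the two conditions for a burning sequence derived in this section: the pairwise spacing inequality $d(y_i, y_{i'}) \geq |i - i'|$ for all $i \neq i'$, and the covering identity~\eqref{eqq}. This reduces the corollary to checking something about each index position in turn.

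First I would dispose of the distance condition. For pairs of indices that avoid position $j$, the bound carries over unchanged from the original burning sequence $(x_1, \ldots, x_k)$. For any pair involving the new entry at position $j$, the hypothesis $d(x, x_i) \geq |i - j|$ for every $i \neq j$ is exactly what is required.

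The substantive step is the covering equation. Because $\bigcup_{i=1}^{k} N_{k-i}[x_i] = V(G)$ already, and the only closed neighborhood that is altered by the substitution is the one at position $j$, it suffices to establish the single inclusion
\[
N_{k-j}[x_j] \subseteq N_{k-j}[x];
\]
the covering identity for the new sequence then follows by replacing $N_{k-j}[x_j]$ in the original union with this larger set. To establish this inclusion, I would take $u \in N_{k-j}[x_j]$ and a shortest $u$-$x_j$ path of length $m \leq k-j$. The closed-neighborhood hypothesis (combined with $x \neq x_j$) forces $x$ and $x_j$ to be adjacent, and the constraint $j \leq k-1$ ensures the slack $k - j \geq 1$. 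When $u = x_j$, the single edge from $x_j$ to $x$ places $u$ within distance $1 \leq k-j$ of $x$; otherwise, one reroutes the shortest path through $x$ by using that the penultimate vertex on the path sits in a closed neighborhood covered by $N[x]$, producing a $u$-$x$ walk of length at most $m \leq k-j$.

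The main obstacle, as I see it, is the final rerouting argument: one has to verify that the one-edge detour introduced near $x_j$ does not overshoot the radius $k-j$, which is precisely the role of the hypothesis $j \leq k-1$. Once the inclusion $N_{k-j}[x_j] \subseteq N_{k-j}[x]$ is in hand, the covering equation for the new sequence is immediate, and together with the distance verification this yields that $(x_1, \ldots, x_{j-1}, x, x_{j+1}, \ldots, x_k)$ is a burning sequence for $G$.
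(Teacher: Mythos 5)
Your overall frame is the right one (and the paper itself offers no proof, calling the corollary immediate): check the spacing inequalities, then reduce the covering identity to the single containment $N_{k-j}[x_j]\subseteq N_{k-j}[x]$. The distance verification is fine. The gap is in the substantive step: that containment does \emph{not} follow from the hypothesis $N[x]\subseteq N[x_j]$ --- in fact the hypothesis yields exactly the opposite containment $N_{k-j}[x]\subseteq N_{k-j}[x_j]$. Your rerouting argument needs the penultimate vertex $u_{m-1}$ of a shortest $u$--$x_j$ path, which lies in $N[x_j]$, to also lie in $N[x]$; that requires $N[x_j]\subseteq N[x]$, the reverse of what is assumed. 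So the one step you flag as ``the main obstacle'' is precisely where the proof breaks.

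The deeper issue is that the statement as printed is false, so no repair of your argument can succeed without changing the hypothesis. Take $G=P_4$ with nodes $v_1,v_2,v_3,v_4$, the burning sequence $(v_2,v_4)$, $j=1$, and $x=v_1$: then $N[v_1]=\{v_1,v_2\}\subseteq\{v_1,v_2,v_3\}=N[v_2]$ and $d(v_1,v_4)=3\geq 1$, yet $(v_1,v_4)$ is not a burning sequence since $v_3$ remains unburned after two rounds. The inclusion in the corollary is evidently a typo for $N[x_j]\subseteq N[x]$; this is the form actually used in the proof of Theorem~\ref{ilttt}, where a clone $x_j$ (whose closed neighbourhood is contained in that of its original $x$) is replaced by $x$. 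Under the corrected hypothesis your rerouting argument works verbatim: the penultimate vertex of a shortest path to $x_j$ lies in $N[x_j]\subseteq N[x]$, giving $d(u,x)\leq m\leq k-j$, and hence $N_{k-j}[x_j]\subseteq N_{k-j}[x]$ whenever $k-j\geq 1$. You should state and prove that corrected version.
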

We consider the burning problem for \emph{connected graphs}. Note that, as is the case for many graph parameters, the burning number of a disconnected graph $G$ with components $G_1, G_2, \ldots,
G_t$, where $t\geq 2$, does not necessarily satisfy the equality $b(G) = b(G_1) + b(G_2) + \cdots + b(G_t)$. For example, let $G$ be the disjoint union of $t$ paths of order $2$, which has burning
number $t+1.$

The following theorem provides an alternative characterization of the burning number. The \emph{depth}
of a node in a rooted tree is the number of edges in a shortest path from the node to the tree's root. The \emph{height} of a rooted tree $T$ is the greatest depth in $T$. A \emph{rooted tree
partition} of $G$ is a collection of rooted trees which are subgraphs of $G$, with the property that the node sets of the trees partition $V(G)$.
%Before stating the theorem we need the following concepts.

\begin{theorem}\label{decomp}
Burning a graph $G$ in $k$ steps is equivalent to finding a rooted tree partition into $k$ trees $T_1, T_2,\ldots, T_k$, with heights at most $(k-1), (k-2), \ldots, 0$, respectively such that for
every $1\le i , j \le k$ the distance between the roots of $T_i$ and $T_j$ is at least $|i-j|$.
\end{theorem}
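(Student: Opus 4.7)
The plan is to prove the equivalence in both directions by using the distance/covering characterization of burning sequences recalled just above the statement: $(x_1,\dots,x_k)$ is a burning sequence for $G$ if and only if $d(x_i,x_j)\geq |i-j|$ for all $i<j$ and equation~(\ref{eqq}) holds. The backward direction is then easy: given a rooted tree partition with roots $x_1,\dots,x_k$, the root-distance condition is by hypothesis, while for any $v\in V(G)$, if $v\in V(T_i)$ then $d_G(v,x_i)\leq \mathrm{height}(T_i)\leq k-i$, so $v\in N_{k-i}[x_i]$. Hence $(x_1,\dots,x_k)$ is a burning sequence and $b(G)\leq k$.

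For the forward direction I would proceed by a Voronoi-style construction. Given a burning sequence $(x_1,\dots,x_k)$, for each $v\in V(G)$ set $f(v)=\min_{1\leq i\leq k}(i+d(v,x_i))$, and let $i(v)$ be the \emph{largest} index attaining this minimum. Define $V(T_i)=\{v:i(v)=i\}$; by construction this partitions $V(G)$. Because the burning sequence covers $V(G)$, one has $f(v)\leq k$, so $d(v,x_{i(v)})\leq k-i(v)$. The largest-index tiebreaker also forces $i(x_i)=i$: from $d(x_i,x_j)\geq |i-j|$ one obtains $j+d(x_i,x_j)\geq i$ with equality at $j=i$ and equality for other $j$ only possible when $j<i$, so $i$ is the largest minimizer at $x_i$.

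The main obstacle is producing a parent function making each $V(T_i)$ into a rooted subtree of $G$. The crucial claim is: for every $v\in V(T_i)$ with $v\neq x_i$, some neighbor $u\in V(T_i)$ satisfies $d(u,x_i)=d(v,x_i)-1$. Take $u$ to be any neighbor of $v$ on a shortest $v$-$x_i$ path, so $i+d(u,x_i)=f(v)-1$ and thus $f(u)\leq f(v)-1$. If strict inequality held, say $f(u)=j+d(u,x_j)<f(v)-1$, then $j+d(v,x_j)\leq f(u)+1<f(v)$ would contradict minimality of $f$ at $v$; so $f(u)=f(v)-1$ and $i$ is itself a minimizer for $u$. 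A symmetric triangle-inequality argument shows that no index $j>i$ can also be a minimizer at $u$, for otherwise $j+d(v,x_j)\leq j+1+d(u,x_j)=f(v)$ would make $j$ a minimizer for $v$ at least as large as $i(v)=i$, violating the largest-index choice. Hence $i(u)=i$, so setting $p(v)=u$ exhibits $T_i$ as a tree rooted at $x_i$; following parents produces a $v$-$x_i$ path in $T_i$ of length $d(v,x_i)\leq k-i$, giving $\mathrm{height}(T_i)\leq k-i$. The root-distance requirement for the partition is exactly the distance condition of the burning sequence, completing the forward direction.
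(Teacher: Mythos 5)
Your proof is correct and follows essentially the same route as the paper's: both directions rest on the covering/distance characterization of burning sequences, and your cells $\{v : i(v)=i\}$ with the parent map along shortest paths to $x_i$ are an explicit rendering of the paper's partition of $V(G)$ according to the source from which each node receives fire, pruned to shortest-path trees rooted at the sources. The only difference is in the details: you break ties by the largest minimizing index of $i+d(v,x_i)$ (which cleanly forces $i(x_i)=i$ and lets you verify connectivity of each cell), whereas the paper uses the smallest index and leaves the rooting and tree structure of each cell largely implicit.
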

\begin{proof}
Assume that $(x_1, x_2,\ldots, x_k)$ is a burning sequence for $G$. For all $1\leq i\leq k$, after $x_i$ is burned, in each round $t> i$ those unburned nodes of $G$ in the $(t-i)$-neighborhood of
$x_i$ will burn. Hence, any node $v$ is burned by receiving fire via a shortest path of burned nodes from a fire source like $x_i$ (this path can be of length zero in the case that $v = x_i$). Hence,
we may define a surjective function $f: V(G)\rightarrow \{x_1, x_2,\ldots, x_k\}$, with $f(v)= x_i$ if $v$ receives fire from $x_i$, where $i$ is chosen with the smallest index. Now $\{ f^{-1} (x_1),
f^{-1} (x_2), \ldots, f^{-1} (x_k)\}$ forms a partition of $V(G)$ such that $G[f^{-1} (x_i)]$ (that is, the subgraph induced by $f^{-1} (x_i)$) forms a connected subgraph of $G$. Since every node $v$ in
$f^{-1} (x_i)$ receives the fire spread from $x_i$ through a shortest path between $x_i$ and $v$, by deleting extra edges in $G[f^{-1} (x_i)]$ we can make a rooted subtree of $G$, called $T_i$ with
root $x_i$. Since every node is burned after $k$ steps, the distance between each node on $T_i$ and $x_i$ is at most $k-i$. Therefore, the height of $T_i$ is at most $k-i$.
\begin{figure}[ht]
\begin{center}
\epsfig{figure=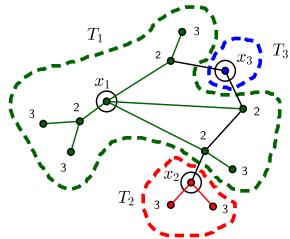,width=1.5in,height=1.3in}
\end{center}
\caption{A rooted tree partition.}\label{egburn}
\end{figure}

Conversely, suppose that we have a decomposition of the nodes of $G$ into $k$ rooted subtrees $T_1, T_2,\ldots, T_k$, such that for each $1\leq i \leq k$, $T_i$ is of height at most $k- i$. Assume
that $x_1, x_2,\ldots, x_k$ are the roots of $T_1, T_2,\ldots, T_k$, respectively, and for each pair $i$ and $j$, with $1\leq i < j\leq k$, $d(x_i, x_j) \geq j-i$. Then $(x_1, x_2,\ldots, x_k)$ is a
burning sequence for $G$, since the distance between any node in $T_i$ and $x_i$ is at most $k-i$. Thus, after $k$ steps the graph $G$ will be burned.
\end{proof}

Figure~\ref{egburn} illustrates Theorem~\ref{decomp}. The burning sequence is $(x_1, x_2, x_3)$. We have shown the decomposition of $G$ into subgraphs $T_1,$ $T_2,$ and $T_3$ based on this burning
sequence by drawing dashed curves around the corresponding subgraphs. Each node has been indexed by a number corresponding to the step that it is burned.

The following corollary is useful for determining the burning number of a graph, as it reduces the problem of burning a graph to burning its spanning trees. Note that for a spanning subgraph $H$ of
$G$, it is evident that $b(G)\leq b(H)$. This follows since, by equation (\ref{eqq}), every optimal burning sequence for $H$ induces a node covering for $V(G) = V(H)$, and therefore, by Corollary
\ref{cov3}, induces a burning sequence of at most the same length for $G$.

\begin{corollary}\label{spann}
For a graph $G$ we have that
$$b(G) = \min\{b(T): T \text{ is a spanning subtree of } G \}.$$
\end{corollary}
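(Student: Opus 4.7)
The plan is to observe that one direction of the equality is already in hand, and to build a specific spanning tree realizing the other direction. The inequality $b(G)\le b(T)$ for every spanning tree $T$ of $G$ is recorded in the paragraph immediately preceding the corollary (distances in $T$ are at least as large as in $G$, so any burning sequence for $T$ yields, via Corollary~\ref{cov3}, a burning sequence of the same length for $G$). Thus, taking the minimum over $T$, $b(G)\le \min_T b(T)$. It remains to exhibit a spanning tree $T$ of $G$ with $b(T)\le b(G)$.

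To produce such a $T$, I would fix an optimal burning sequence $(x_1,\dots,x_k)$ for $G$ with $k=b(G)$, and apply Theorem~\ref{decomp} to obtain a rooted tree partition $T_1,\dots,T_k$ of $G$ with $T_i$ rooted at $x_i$, of height at most $k-i$, and with pairwise root distances $d_G(x_i,x_j)\ge |i-j|$. The trees $T_i$ are pairwise vertex-disjoint subtrees of $G$ whose union covers $V(G)$, but together they do not yet form a spanning tree.

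Next, I would splice the pieces together without disturbing them. Contract each $T_i$ to a single vertex in $G$; the resulting multigraph on $k$ vertices is connected because $G$ is, so it has a spanning tree. Lifting the $k-1$ edges of this spanning tree back to edges of $G$ (each one connects some $T_i$ to some $T_j$, $i\ne j$) and adding them to $\bigcup_i E(T_i)$ produces a connected, acyclic spanning subgraph $T$ of $G$, that is, a spanning tree. By construction $T$ contains each $T_i$ as a (rooted) subtree.

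Finally, I would apply Theorem~\ref{decomp} in the reverse direction to $T$, using the very same partition $\{T_1,\dots,T_k\}$: the heights are unchanged (no $T_i$ was altered), and since $T$ is a spanning subgraph of $G$, distances can only grow, so $d_T(x_i,x_j)\ge d_G(x_i,x_j)\ge |i-j|$. Hence $(x_1,\dots,x_k)$ is a burning sequence for $T$, giving $b(T)\le k=b(G)$, which completes the proof. The only non-routine step is the middle one, and the contraction/spanning-tree argument handles it cleanly; everything else is an immediate application of Theorem~\ref{decomp} in each direction.
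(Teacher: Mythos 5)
Your proof is correct and takes essentially the same route as the paper's: both extract the rooted tree partition $T_1,\ldots,T_k$ from Theorem~\ref{decomp}, join the pieces into a spanning tree by adding connecting edges that create no cycles, and combine the resulting bound $b(T)\le k$ with the monotonicity $b(G)\le b(T)$ for spanning subgraphs. Your contraction argument simply makes explicit the paper's terser phrase about ``adding edges between the $T_i$'s which do not induce a cycle.''
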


\begin{proof}
By Theorem~\ref{decomp}, we assume that $T_1, T_2,\ldots, T_k$ is a rooted tree partition of $G$, where $k=b(G)$, derived from an optimal burning sequence for $G$. If we take $T$ to be a spanning
subtree of $G$ obtained by adding edges between the $T_i$'s which do not induce a cycle in $G$, then $b(T)\leq k = b(G) \leq b(T)$, where the second inequality holds since $T$ is a
spanning subgraph of $G$.
\end{proof}

\medskip

A subgraph $H$ of a graph $G$ is called an \emph{isometric subgraph} if for every pair of nodes $u$ and $v$ in $H$, we have that $d_H(u,v) = d_G(u,v)$. For example, a subtree of a tree is an
isometric subgraph. As another example, if $G$ is a connected graph and $P$ is a shortest path connecting two nodes of $G$, then $P$ is an isometric subgraph of $G$. Let $W_5$ be the wheel graph
formed by adding a universal node to a $5$-cycle. Then, the $5$-cycle $C_5$ is an isometric subgraph of $W_5$, while $b(C_5) = 3 > 2 = b(W_n)$. Thus, we conclude that the burning number is not
monotonic even on the isometric subgraphs of a graph. However, the following theorem shows that the burning number is monotonic on the isometric subgraphs in certain cases.

\begin{theorem}\label{isometric1}
Let $H$ be an isometric subgraph of a graph $G$ such that, for any node $x\in V(G) \setminus V(H)$, and any positive integer $r$, there exists a node $f_r(x) \in V(H)$ for which $N_r[x]\cap V(H)
\subseteq N^H_r[f_r(x)] $. Then we have that $b(H) \leq b(G)$.
\end{theorem}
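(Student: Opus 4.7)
The plan is to construct a burning sequence for $H$ of length at most $b(G)$ by projecting an optimal burning sequence of $G$ onto $V(H)$, then invoke Corollary~\ref{cov3}. Let $k = b(G)$ and fix an optimal burning sequence $(x_1, x_2, \ldots, x_k)$ for $G$, so by equation~(\ref{eqq}) we have
\[
\bigcup_{i=1}^{k} N^G_{k-i}[x_i] = V(G).
\]
Intersecting with $V(H)$ gives a covering of $V(H)$ by the sets $N^G_{k-i}[x_i] \cap V(H)$, and the goal is to replace each such set by $N^H_{k-i}[y_i]$ for a suitable $y_i \in V(H)$.

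Next I would define the projected sequence case by case. If $x_i \in V(H)$, set $y_i := x_i$; the isometry of $H$ in $G$ forces $d_H(u, x_i) = d_G(u, x_i)$ for every $u \in V(H)$, hence $N^G_{k-i}[x_i] \cap V(H) \subseteq N^H_{k-i}[y_i]$. If $x_i \notin V(H)$ and $k - i \geq 1$, set $y_i := f_{k-i}(x_i)$, which exists by hypothesis and satisfies $N^G_{k-i}[x_i] \cap V(H) \subseteq N^H_{k-i}[y_i]$ directly from the assumption. The only degenerate case is $i = k$ with $x_k \notin V(H)$: here the hypothesis is only stated for positive $r$, but this is harmless because $N^G_0[x_k] \cap V(H) = \{x_k\} \cap V(H) = \emptyset$, so the corresponding term contributes nothing to the covering and we may take $y_k$ to be an arbitrary node of $H$.

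Combining these inclusions across all $i$ yields
\[
V(H) \subseteq \bigcup_{i=1}^{k} \bigl( N^G_{k-i}[x_i] \cap V(H) \bigr) \subseteq \bigcup_{i=1}^{k} N^H_{k-i}[y_i],
\]
so the sequence $(y_1, y_2, \ldots, y_k)$ of nodes in $H$ satisfies the hypothesis of Corollary~\ref{cov3} applied to the graph $H$. Therefore $b(H) \leq k = b(G)$.

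The substantive content is really in the two case verifications in the middle paragraph; no genuine obstacle stands in the way once one notes that Corollary~\ref{cov3} imposes no distance condition on the sequence, so we are not forced to check $d_H(y_i, y_j) \geq |i - j|$ — only the covering condition needs to hold. The slight awkwardness of the $r = 0$ endpoint is absorbed by the observation that $N_0[x_k]$ contributes nothing to $V(H)$ when $x_k \notin V(H)$.
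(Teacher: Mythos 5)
Your proof is correct and follows essentially the same route as the paper: project the optimal burning sequence of $G$ onto $V(H)$ using the hypothesized nodes $f_{k-i}(x_i)$ (or $x_i$ itself, via isometry, when $x_i \in V(H)$), and conclude by the covering criterion of Corollary~\ref{cov3}. Your explicit treatment of the $i=k$, $x_k \notin V(H)$ case (where $r=0$ falls outside the hypothesis but $N_0[x_k] \cap V(H) = \emptyset$) is in fact slightly more careful than the paper's, which sidesteps degeneracies only with the assumption $|V(H)| > k$.
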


\begin{proof}
It suffices to show that for any optimal burning sequence of $G$ such as $(x_1, x_2, \ldots, x_k)$ we can assign a burning sequence of length at most $k$ to $H$. Without loss of generality, we may
assume that $|V(H)|> k$ (otherwise, $H$ can be burned in at most $|V(H)|\leq k$ steps). By equation (\ref{eqq}), $N_{k-1}[x_1] \cup N_{k-2}[x_2]\cup \ldots \cup N_0[x_k] = V(G)$.

We define the function $f: \{x_1, x_2, \ldots, x_k\} \rightarrow V(H)$ as follows. For $1\leq i\leq k$, if $x_i\in V(H)$, then we define $f(x_i) = x_i$; otherwise, by assumption, there is a node
$f_{k-i}(x_i)\in V(H)$ for which $N_{k-i}[x_i] \cap V(H) \subseteq N^H_{k-i}[f_{k-i}(x_i)]$. In this case, we define $f(x_i) = f_{k-i}(x_i)$. Since $H$ is an isometric subgraph of $G$, then for each
node $x_i$ with $f(x_i) = x_i$, and for every node $v \in N_{k-i}[x_i]\cap V(H)$, we have that $d_H(x_i, v) =  d_G(x_i, v) \leq k-i $. Thus, if $f(x_i) = x_i$, then $N_{k-i}[x_i] \cap V(H) =
N^H_{k-i}[x_i] = N^H_{k-i}[f(x_i)]$. Hence, we derive that
\begin{align*}
V(H) & = V(G) \cap V(H)\\
& = \left( N_{k-1}[x_1] \cup \ldots \cup N_0[x_k] \right) \cap V(H)\\
& = \left( N_{k-1}[x_1] \cap V(H)\right) \cup \ldots \cup \left( N_0[x_k] \cap V(H)\right) \\
& \subseteq  N^H_{k-1}[f(x_1)] \cup \ldots \cup N^H_0[f(x_k)].
\end{align*}
Therefore, $\{N_{k-i}[f(x_i)]\}_{i=1}^k$ forms a covering for the node set of $H$, with $k$ closed neighbourhoods. Thus, by
Corollary \ref{cov3}, we conclude that $b(H) \leq b(G)$.
\end{proof}

The following theorem shows that the isometric subtrees of a graph satisfy the conditions in Theorem \ref{isometric1}.

\begin{theorem}\label{isometric2}
For any isometric subtree $H$ of a graph $G$, we have that $b(H) \leq b(G)$.
\end{theorem}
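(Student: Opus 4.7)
The plan is to apply Theorem~\ref{isometric1}, so the task reduces to verifying its hypothesis when $H$ is an isometric subtree. Given a node $x\in V(G)\setminus V(H)$ and a positive integer $r$, I must exhibit a node $f_r(x)\in V(H)$ with $N_r[x]\cap V(H)\subseteq N_r^H[f_r(x)]$.

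Set $S = N_r[x]\cap V(H)$; if $S=\emptyset$ pick any node of $H$ as $f_r(x)$. Otherwise, the isometry of $H$ together with the triangle inequality in $G$ gives, for any $u,v\in S$,
\[
d_H(u,v) \;=\; d_G(u,v) \;\leq\; d_G(u,x)+d_G(x,v) \;\leq\; 2r.
\]
Thus $S$ has diameter at most $2r$ measured inside the tree $H$.

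The next step is a classical fact about trees: any finite set of nodes $S$ in a tree $T$ admits a node at distance at most $\lceil \operatorname{diam}_T(S)/2\rceil$ from every element of $S$. I would prove this by taking the minimal subtree $T_S$ of $H$ spanning $S$; every leaf of $T_S$ lies in $S$, so $\operatorname{diam}(T_S)=\operatorname{diam}_H(S)\leq 2r$, and then invoke Jordan's theorem on tree centers to produce a node $c\in V(T_S)$ with eccentricity at most $\lceil \operatorname{diam}(T_S)/2\rceil\leq r$ inside $T_S$. Because $T_S$ is a subtree of $H$ it is isometric in $H$, so $d_H(c,v)=d_{T_S}(c,v)\leq r$ for every $v\in S$. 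Setting $f_r(x)=c$ yields the required containment $N_r[x]\cap V(H)\subseteq N_r^H[f_r(x)]$.

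With this choice of the function $f_r$, the hypothesis of Theorem~\ref{isometric1} is satisfied, and the conclusion $b(H)\leq b(G)$ follows directly. I do not anticipate any real obstacle: the isometry of $H$ is used in exactly one place, namely to upgrade the $G$-distance bound into an $H$-distance bound, after which the result is reduced to the standard tree-center estimate.
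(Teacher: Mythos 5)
Your proposal is correct and follows essentially the same route as the paper: both reduce to Theorem~\ref{isometric1}, use the isometry of $H$ to bound the $H$-diameter of $N_r[x]\cap V(H)$ by $2r$, and then take a suitable ``center'' as $f_r(x)$. The only difference is that the paper establishes the eccentricity bound by an explicit case analysis on the midpoint of a diametral path in $X_r$, whereas you invoke the classical radius--diameter relation for trees applied to the Steiner subtree; both are valid.
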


\begin{proof}
By Theorem \ref{isometric1}, it suffices to show that for any node $x\in V(G) \setminus V(H)$, and any positive integer $r$, there exists a node $f_r(x) \in V(H)$ for which $N_r[x]\cap V(H) \subseteq
N^H_r[f_r(x)]$.

Assume that $X_r = N_r[x] \cap V(H)$. If $X_r$ is empty, then we can choose $f_r(x)$ to be any node in $H$. If $X_r = \{v\}$, then we take $f_r(x) = v$, and clearly $N_r[x]\cap V(H) = \{v\} \subseteq
N^H_r[v]$. Hence, we assume that $|X_r|\geq 2$. Since $H$ is a tree, then there is a unique path (consisting of the nodes in $H$ only) between every pair of distinct nodes in $X_r\subseteq V(H)$. Let
$y_r$ and $z_r$ be two nodes in $X_r$ with the maximum distance over all possible pairs of nodes in $X_r$, and let $w_r$ be a node in $H$ that is of almost equal distance with respect to $y_r$ and
$z_r$. That is, $d(w_r,y_r) = d(w_r, z_r)$, if $d(y_r,z_r)$ is even, and $d(w_r, z_r) = d(w_r,y_r) + 1$ (without of loss of generality) in the case that $d(y_r,z_r)$ is odd.
%Also, let $H_r$ be the smallest subtree of $H$ that contains $X_r$; that is, the leaves of $H_r$ are the nodes in $X_r$.
We claim that for each $v\in X_r$, $d(v, w_r) \leq r$.

Since, by assumption, $H$ is an isometric subtree of $G$, then the length of the path between $y_r$ and $z_r$ in $H$ is equal to $d(y_r, z_r)$ in $G$. Thus, we have that $d(y_r, z_r) = d(w_r,
y_r)+d(w_r, z_r) \leq d(x , y_r) + d(x, z_r)$. On the other hand, we have that $d(v, w_r) \leq d(z_r, w_r)$. To show this, we have to consider two possibilities; either $v$ is on the path in $H$ that
connects $z_r$ to $w_r$, or it is not. If the former holds, then obviously, $d(v, w_r) \leq d(z_r, w_r)$. If the latter holds, then suppose $u$ is the first node that appears in both paths that
connect $v$ and $z_r$ to $w_r$. If $u= w_r$, then we have that
\begin{align*}
d(v, w_r) + d(w_r , z_r) = d(v, z_r) \leq d(y_r, z_r) = d(y_r , w_r ) + d(w_r , z_r).
\end{align*}
It implies that $d(v, w_r) \leq d(y_r, w_r) \leq d(z_r, w_r)$.
If $u \neq w_r$, then we have that
\begin{align*}
d(v, w_r) + d(w_r , y_r) = d(v, y_r) \leq d(z_r, y_r) = d(z_r , w_r ) + d(w_r , y_r).
\end{align*}
Hence, we again conclude that $d(v, w_r) \leq d(z_r, w_r)$. Consequently, we have that
\begin{align*}
d(v , w_r) & \leq d(z_r , w_r) \\
& \leq \frac{d(y_r , z_r) + 1}{2} \\
& \leq \frac{d(y_r , x) + d(z_r, x) +1}{2} \\
& \leq \frac{r + r + 1}{2} \\
&= r + \frac{1}{2}.
\end{align*}
Since $d(v , w_r)$ is an integer, it implies that $d(v , w_r) \leq r$. Therefore, if we define $f_r(x) = w_r$, then $X_r \subseteq N^H_r[f_r(x)]$. Thus, the proof follows.
\end{proof}

However, the above inequality may fail for non-isometric subtrees. For example, let $H$ be a path of order $5$, and form $G$ by adding a universal node (that is, one joined to all others) to $H.$
Then $b(H)=3$, but $b(G)=2$.

The following corollary is an immediate consequence of Theorem~\ref{isometric2}.

\begin{corollary}\label{subtree}
If $T$ is a tree and $H$ is a subtree of $T$, then we have that $b(H)\leq b(T).$
\end{corollary}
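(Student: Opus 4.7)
The plan is to reduce Corollary~\ref{subtree} directly to Theorem~\ref{isometric2}. All that is needed is the observation that in the tree setting, any subtree is automatically an isometric subgraph; once that is in hand, Theorem~\ref{isometric2} gives $b(H) \leq b(T)$ with no further work.

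To establish the isometry claim, I would argue as follows. Let $u, v \in V(H)$ and let $P_H$ be the unique $u,v$-path in the tree $H$, and let $P_T$ be the unique $u,v$-path in the tree $T$. Since $H$ is a subgraph of $T$, the path $P_H$ is also a $u,v$-walk in $T$, and therefore contains a $u,v$-path of $T$. But in a tree, there is only one $u,v$-path, so $P_H = P_T$. Consequently, $d_H(u,v)$ equals the length of $P_H$, which equals the length of $P_T$, which is $d_T(u,v)$. Hence $H$ is an isometric subtree of $T$.

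With this observation recorded, the proof concludes in one line: since $H$ is an isometric subtree of $T$, Theorem~\ref{isometric2} (applied with $G = T$) yields $b(H) \leq b(T)$.

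There is essentially no obstacle here; the only care needed is the very short argument that distinct paths cannot coexist between two nodes of a tree, which is a standard fact. The entire corollary is really a specialization of Theorem~\ref{isometric2} to the case in which both the ambient graph and the subgraph are trees.
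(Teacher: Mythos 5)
Your proposal is correct and matches the paper's own treatment: the paper derives Corollary~\ref{subtree} as an immediate consequence of Theorem~\ref{isometric2}, relying on the (earlier stated) fact that a subtree of a tree is an isometric subgraph. Your explicit uniqueness-of-paths argument for that fact is a sound filling-in of a detail the paper leaves as standard.
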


The burning number of paths is derived in the following result.

\begin{theorem}\label{path}
For a path $P_n$ on $n$ nodes, we have that $b(P_n) = \lceil n^{1/2}\rceil$.
\end{theorem}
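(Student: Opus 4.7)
The plan is to prove matching upper and lower bounds, $b(P_n)\le\lceil n^{1/2}\rceil$ and $b(P_n)\ge\lceil n^{1/2}\rceil$.

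For the lower bound I will use a simple counting argument. If $(x_1,\ldots,x_k)$ is any burning sequence for $P_n$, then for each $i$ the source $x_i$ can only burn nodes in $N_{k-i}[x_i]$, which in a path contains at most $2(k-i)+1$ vertices. By equation~(\ref{eqq}) every node of $P_n$ lies in one of these neighbourhoods, so
\[
n \;\le\; \sum_{i=1}^{k}\bigl(2(k-i)+1\bigr) \;=\; \sum_{j=0}^{k-1}(2j+1) \;=\; k^{2},
\]
which yields $k\ge\lceil n^{1/2}\rceil$.

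For the upper bound, set $k=\lceil n^{1/2}\rceil$, so $k^{2}\ge n$. Since $P_n$ is an isometric subtree of $P_{k^{2}}$, Corollary~\ref{subtree} reduces the task to showing $b(P_{k^{2}})\le k$. Labelling the vertices of $P_{k^{2}}$ by $1,2,\ldots,k^{2}$, I will place sources at
\[
x_i \;=\; (2i-1)k - i(i-1), \qquad 1\le i\le k.
\]
A direct computation gives $N_{k-i}[x_i]=\{\,2(i-1)k-i^{2}+2i,\ldots,i(2k-i)\,\}$, and a brief check shows that the right endpoint $i(2k-i)$ of the $i$-th interval is exactly one less than the left endpoint of the $(i+1)$-th, so these closed neighbourhoods tile $\{1,\ldots,k^{2}\}$. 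It then remains to verify the burning-sequence condition $d(x_i,x_j)\ge j-i$ for $i<j$: a short calculation gives $x_j-x_i=(j-i)(2k-i-j+1)$, so the condition reduces to $i+j\le 2k$, which is automatic since $1\le i<j\le k$ forces $i+j\le 2k-1$. Corollary~\ref{cov3} then yields $b(P_{k^{2}})\le k$.

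Main obstacle: the only substantive step is guessing the source positions so that the neighbourhoods tile the path exactly; the formula $x_i=(2i-1)k-i(i-1)$ is forced by requiring $N_{k-i}[x_i]$ to begin immediately after $N_{k-i+1}[x_{i-1}]$, which telescopes. Once this is in hand, verifying the endpoint and distance conditions is routine arithmetic, and passing through $P_{k^{2}}$ via Corollary~\ref{subtree} sidesteps the minor inconvenience of adapting the construction when $n<k^{2}$.
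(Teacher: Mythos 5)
Your proof is correct and follows essentially the same route as the paper: the identical counting argument via $|N_{k-i}[x_i]|\le 2(k-i)+1$ for the lower bound, and an explicit tiling of the path by the closed neighbourhoods for the upper bound. The only difference is cosmetic --- the paper places the sources directly on $P_n$ (working from one end and absorbing the leftover in the choice of $x_1$), whereas you tile $P_{k^2}$ exactly and invoke Corollary~\ref{subtree} to pass down to $P_n$.
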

\begin{proof}
Suppose that $(x_1, x_2, \ldots, x_k)$ is an optimal burning sequence for $P_n$. By equation~(\ref{eqq}), and the fact that for a node $v$ in a path, $|N_i[v]|\leq 2i + 1$ we derive that
%and the fact that for a node $v$ in a path, $|N_i[v]| \leq 2i+1.$
\begin{align*}
& \left( 2(k-1) + 2(k-2) + \ldots + 2(1)\right)  + k  \\
& = 2 \left( \frac{k(k-1)}{2} \right) + k \\
& = k^2 \geq n.
\end{align*}
Since $k$ is the minimum number satisfying this inequality, we conclude that $b(P_n) \geq  \lceil n^{1/2}\rceil$.

Now, assume that $k=  \lceil n^{1/2}\rceil$, and let $P_n: v_1, v_2, \ldots, v_n$. Then for $0\leq i \leq k-2$, we choose $x_{k-i} = v_{n- i^2 -i}$. Also, if $n\geq (k-1)^2 + k$, we take $x_1 = v_{n-
(k-1)^2 -(k -1)}$; otherwise we take $x_1 = v_1$. Therefore, we can burn $P_n$ in exactly $k$ steps by the burning sequence $(x_1, x_2, \ldots, x_k)$. Hence, $b(P_n)\leq k$. Thus, $b(P_n) = \lceil
n^{1/2}\rceil$.
\end{proof}

We have the following immediate corollaries.

\begin{corollary}
\begin{enumerate}
\item For a cycle $C_n$, we have that $b(C_n) = \lceil n^{1/2}\rceil$.
\item For a graph $G$ of order $n$ with a Hamiltonian (that is, a spanning) path, we have that $b(G) \leq \lceil n^{1/2}\rceil$.
\end{enumerate}
\end{corollary}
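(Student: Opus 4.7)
The plan is to derive both parts as quick consequences of Theorem~\ref{path} together with the observation (stated just before Corollary~\ref{spann}) that $b(G)\leq b(H)$ for any spanning subgraph $H$ of $G$.

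For part (2), I would start from a Hamiltonian path $P$ of $G$. Since $P$ is a spanning subgraph of $G$ isomorphic to $P_n$, the spanning-subgraph monotonicity gives $b(G)\leq b(P)=b(P_n)$, and Theorem~\ref{path} finishes the bound as $b(G)\leq \lceil n^{1/2}\rceil$.

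For part (1), the upper bound is immediate from part (2): removing any single edge from $C_n$ produces a Hamiltonian path on $n$ vertices, so $b(C_n)\leq \lceil n^{1/2}\rceil$. For the lower bound, I would re-run the counting argument used for paths. In a cycle, every node $v$ still satisfies $|N_i[v]|\leq 2i+1$, so if $(x_1,\ldots,x_k)$ is an optimal burning sequence for $C_n$, equation~(\ref{eqq}) and the sizes of the closed $i$-neighbourhoods force
\begin{equation*}
n \;\leq\; \sum_{i=1}^{k}\bigl(2(k-i)+1\bigr) \;=\; k^2,
\end{equation*}
hence $b(C_n)\geq \lceil n^{1/2}\rceil$. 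Combining the two bounds yields equality.

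There is really no obstacle here: the only minor point worth noting in the writeup is that the $2i+1$ neighbourhood bound that drove the proof of Theorem~\ref{path} continues to hold in $C_n$ (it is only an upper bound on $|N_i[v]|$ in a $2$-regular graph, so the counting step carries over verbatim), and that the spanning-subgraph inequality from the paragraph preceding Corollary~\ref{spann} is what converts the Hamiltonian hypothesis in part (2) into the claimed bound.
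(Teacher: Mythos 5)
Your proof is correct and follows exactly the route the paper intends: the paper states this as an immediate corollary of Theorem~\ref{path}, with part (2) via the spanning-subgraph inequality $b(G)\leq b(H)$ and part (1) via the same $|N_i[v]|\leq 2i+1$ counting bound for the lower bound plus edge-deletion for the upper bound. Your write-up simply supplies the details the paper omits.
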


The following theorem gives sharp bounds on the burning number. For $s\ge 3$, let $K_{1, s}$ denotes a \emph{star}; that is, a complete bipartite graph with parts of order $1$ and $s$. We call a
graph obtained by a sequence of subdivisions starting from $K_{1,s}$ a \emph{spider graph}. In a spider graph $G$, any path which connects a leaf to the node with maximum degree is called an
\emph{arm} of $G$. If all the arms of a spider graph with maximum degree $s$ are of the same length $r$, we denote such a spider graph by $SP(s,r)$.

\begin{lemma}\label{rad}
For any graph $G$ with radius $r$ and diameter $d$, we have that
$$\lceil (d+1)^{1/2}\rceil\leq b(G)\leq r+1.$$
\end{lemma}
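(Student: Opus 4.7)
The plan is to handle the two inequalities separately, invoking the covering and isometric-subgraph machinery already in place.

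For the upper bound $b(G)\le r+1$, I will apply Theorem \ref{corr1} to the trivial one-element covering $\{G\}$. Since $G$ is a connected subgraph of itself with radius $r$, the hypothesis of that theorem is satisfied with $t=1$ and $k=r$, which immediately yields $b(G)\le 1+r$. Equivalently, if $v$ is a center of $G$, then $N_r[v]=V(G)$, so taking $x_1=v$ together with any completion $x_2,\ldots,x_{r+1}$ to a sequence of distinct nodes fulfills the covering condition of Corollary \ref{cov3}.

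For the lower bound $\lceil(d+1)^{1/2}\rceil\le b(G)$, the idea is to isolate an isometric subtree whose burning number is already known. I will pick $u,w\in V(G)$ with $d_G(u,w)=d$ and let $P$ be a shortest path from $u$ to $w$ in $G$. Then $P$ is a path on $d+1$ nodes, hence a tree, and being a geodesic it is an isometric subgraph of $G$. Theorem \ref{isometric2} then gives $b(P)\le b(G)$, while Theorem \ref{path} evaluates $b(P)=\lceil(d+1)^{1/2}\rceil$, and these two facts combined deliver the desired inequality.

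There is no substantial obstacle here: all the heavy lifting has already been done in Theorems \ref{corr1}, \ref{path}, and \ref{isometric2}. The only point worth flagging is that a shortest path between two diametrically opposite nodes must indeed be recognized as an isometric subtree, which is immediate from its definition as a geodesic, so that Theorem \ref{isometric2} is legitimately applicable. Sharpness is witnessed at the lower end by $P_n$ (where $d=n-1$ and $b(P_n)=\lceil n^{1/2}\rceil$) and at the upper end by $K_n$ (where $r=1$ and $b(K_n)=2$).
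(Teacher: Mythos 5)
Your proposal is correct and follows essentially the same route as the paper: the upper bound comes from burning a central node first (the paper argues this directly, you route it through Theorem \ref{corr1}/Corollary \ref{cov3}, which amounts to the same thing), and the lower bound comes from taking a geodesic realizing the diameter as an isometric subtree and applying Theorems \ref{isometric2} and \ref{path}. No gaps.
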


\begin{proof}
Assume that $c$ is a central node of $G$ with eccentricity $r$. Since every node in $G$ is within distance $r$ from $c$, the fire will spread to all nodes after $r+1$ steps. Hence, $r+1$ is an upper
bound for $b(G)$.

Now, let $P$ be a path connecting two nodes $u$ and $v$ in $G$ with $d(u,v)=d$. Since $P$ is an isometric subtree of $G$, and $|P|= d+1$, by Theorem \ref{isometric2} and Theorem \ref{path}, we
conclude that $b(G)\geq b(P) = \lceil (d+1)^{1/2}\rceil$.
\end{proof}

The lower bound is achieved by paths, and the right side bound is achieved by spider graphs $SP(s,r)$, where $s\geq r$ (as proven in \cite{bjr}).
%Note that when proving $b(G)\leq r+1$ in Theorem~\ref{rad}, we viewed $G$ as covered by a ball
%with radius $r$, with a central node chosen as a center of the ball. Hence, by burning a central node, after $r+1$ steps every node in $G$ will be burned. We may generalize this idea to the case that there is a covering of $G$ by a collection of balls with a specified radius.
%A set $S$ of subsets of nodes is a \emph{covering} of a graph $G$, if the union of $S$ equals $V(G).$

%begin{theorem}\label{covering}
%Let $\{C_1, C_2,\ldots, C_t\}$ be a covering of graph $G$ consisting of $t$ connected subgraphs each of radius at most $k$. Then we have that $b(G)\leq t + k$.
%\end{theorem}

%As an immediate consequence of Theorem~\ref{covering}, we have the following corollary.

\medskip

We finish this section by providing some bounds on the burning number in terms of certain domination numbers.  A \emph{$k$-distance dominating set} like $D_k$ for $G$ is a subset of nodes such that
for every node $u\in V(G) \setminus D_k$, there exists a node $v\in D_k$, with $d(u,v)\leq k$. The number of the nodes in a minimum $k$-distance dominating set of $G$ is denoted by $\gamma_k(G)$ and
we call it the \emph{$k$-distance domination number} of $G$. We have the following results on connections between burning and distance domination.

\begin{theorem}\label{gamb}
If $G$ with burning number $k$, then we have that $k\geq \gamma_{k-1}(G)$.
\end{theorem}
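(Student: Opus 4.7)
The plan is to use an optimal burning sequence directly to exhibit a $(k-1)$-distance dominating set of size $k$. Suppose $(x_1, x_2, \ldots, x_k)$ is an optimal burning sequence for $G$; I claim that the set $S = \{x_1, \ldots, x_k\}$ is itself a $(k-1)$-distance dominating set of $G$, which immediately forces $\gamma_{k-1}(G) \leq |S| = k$.

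To verify the claim, I would invoke equation (\ref{eqq}) from the characterization of burning sequences, namely
\[
N_{k-1}[x_1] \cup N_{k-2}[x_2] \cup \cdots \cup N_0[x_k] = V(G).
\]
Fix any node $v \in V(G) \setminus S$. By the above equality, there is some index $i$ with $1 \leq i \leq k$ such that $v \in N_{k-i}[x_i]$, meaning $d(v, x_i) \leq k-i$. Since $k - i \leq k - 1$ for every $i \geq 1$, we conclude that $d(v, x_i) \leq k-1$, so $v$ is within distance $k-1$ of a node in $S$. Thus $S$ is a $(k-1)$-distance dominating set.

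There is essentially no obstacle here: the argument is a one-line consequence of the covering equation (\ref{eqq}) combined with the monotonicity $N_{k-i}[x_i] \subseteq N_{k-1}[x_i]$. The only mild subtlety is that if $i = k$, then the ``neighbourhood'' $N_0[x_k] = \{x_k\}$ contributes only the source itself to the cover, but this still fits the definition of a $(k-1)$-dominating set because $x_k \in S$ and dominating sets need only dominate vertices outside themselves. Hence the conclusion $k \geq \gamma_{k-1}(G)$ follows directly.
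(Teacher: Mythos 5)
Your proof is correct and is essentially identical to the paper's own argument: both take the set of fire sources from an optimal burning sequence and use equation (\ref{eqq}) to see that every node lies within distance $k-i \leq k-1$ of some $x_i$, so the sources form a $(k-1)$-distance dominating set of size $k$. No further comment is needed.
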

\begin{proof}
Assume that $b(G)=k$, for some positive integer $k$, and $(x_1, x_2, \ldots, x_k)$ is an optimal burning sequence for $G$. Then by (\ref{eqq}), we know that every node $v$ in $G$ must be
within the distance $k-i \leq k-1$ from one of the $x_i$'s. Hence, $D= \{x_1, x_2, \ldots, x_k\}$ forms a $(k-1)$-distance dominating set for $G$.
\end{proof}

We have the following lemma.
\begin{lemma}\label{mingammak}
For any graph $G$, if $m = \min_{k\geq 1}\{ \gamma_k (G) + k\}$, then $\frac{m+1}{2}\leq b(G) \leq m$.
\end{lemma}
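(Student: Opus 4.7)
The plan is to prove the two inequalities separately, and both follow almost immediately from results already established in the section.

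For the upper bound $b(G)\le m$, I would pick the integer $k\ge 1$ that attains the minimum, so $m=\gamma_k(G)+k$. Let $D=\{v_1,v_2,\ldots,v_{\gamma_k(G)}\}$ be a minimum $k$-distance dominating set. By definition of $k$-distance domination, the family $\{N_k[v_1],\ldots,N_k[v_{\gamma_k(G)}]\}$ covers $V(G)$. Each $N_k[v_i]$ induces a connected subgraph of $G$ (every node in $N_k[v_i]$ is joined to $v_i$ by a shortest path of length at most $k$, all of whose vertices lie in $N_k[v_i]$), and its radius, viewed as a subgraph with root $v_i$, is at most $k$. Therefore I can apply Theorem~\ref{corr1} with $t=\gamma_k(G)$ to conclude $b(G)\le \gamma_k(G)+k=m$.

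For the lower bound $\frac{m+1}{2}\le b(G)$, I would reduce it to Theorem~\ref{gamb}. Let $b=b(G)$. Applying Theorem~\ref{gamb} gives $\gamma_{b-1}(G)\le b$, hence
\begin{equation*}
m=\min_{k\ge 1}\{\gamma_k(G)+k\}\le \gamma_{b-1}(G)+(b-1)\le b+(b-1)=2b(G)-1,
\end{equation*}
which rearranges to $b(G)\ge \frac{m+1}{2}$, as required. The one small wrinkle is that Theorem~\ref{gamb} is applied with index $k=b-1$, so I should note the edge case $b(G)=1$ (which only happens for the one-vertex graph, where $m=1$ and the bound holds trivially) to be sure the minimum is taken over a valid range.

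Overall, I do not anticipate any serious obstacle: the upper bound is a direct instantiation of Theorem~\ref{corr1} on closed neighbourhoods of a dominating set, and the lower bound is an algebraic rearrangement of Theorem~\ref{gamb}. The only care needed is to verify that closed $k$-neighbourhoods genuinely qualify as connected subgraphs of radius at most $k$ (so that Theorem~\ref{corr1} applies), and to handle the trivial case $b(G)=1$ separately when invoking Theorem~\ref{gamb}.
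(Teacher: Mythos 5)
Your proof is correct and takes essentially the same route as the paper: the upper bound instantiates Theorem~\ref{corr1} on the closed $k$-neighbourhoods of a minimum $k$-distance dominating set, and the lower bound substitutes $k=b(G)-1$ into Theorem~\ref{gamb} and rearranges. (Your aside on the one-vertex case is slightly off --- there $m=\gamma_1(G)+1=2$, not $1$ --- but the paper silently ignores that degenerate case as well, so this does not distinguish the two arguments.)
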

\begin{proof}
Assume that $m = \min_{k\geq 1}\{ \gamma_k (G) + k\}$, and $b(G)=k_0$. Then by Theorem \ref{gamb}, $b(G) = k_0 \geq \gamma_{k_0 -1}$. Hence, $$k_0 + (k_0 -1) \geq \gamma_{k_0 -1} + k_0 -1 \geq
\min_{k\geq 1}\{ \gamma_k (G) + k\}= m .$$ Therefore, $k_0 \geq \frac{m+1}{2} $.

On the other hand, assume that $D_k =\{x_1, x_2, \ldots, x_{\gamma_k}\}$ is a minimum $k$-distance dominating set for $G$. Then $\{S_1, S_2,\ldots, S_{\gamma_k}\}$, with $S_i = \{v\in V(G): d(v,
x_i)\leq k\}$, where $1\leq i\leq \gamma_k$,  is a covering for the nodes of $G$ which consists of $\gamma_k(G)$ subsets each of radius at most $k$. Thus, by Corollary \ref{corr1}, we have that
$b(G)\leq \gamma_k + k$. The result follows since this is true for any $k\geq 1$. Therefore, we have that
$$\frac{m+1}{2}\leq b(G)\leq m,$$
and the proof follows.
\end{proof}

We have the following fact about the $k$-distance domination number of graphs.

\begin{theorem}\cite{mm}\label{upgammak}
If $G$ is a connected graph of order $n$, with $n\geq k+1$, then we have that
$$\gamma_k(G) \leq \frac{n}{k+1}.$$
\end{theorem}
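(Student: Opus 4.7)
The plan is to reduce to the case of trees and then argue by induction on $n$. First, I would observe that for any connected graph $G$ and any spanning tree $T$ of $G$, one has $d_G(u,v)\le d_T(u,v)$ for all pairs of nodes $u,v$, so every $k$-distance dominating set of $T$ is automatically a $k$-distance dominating set of $G$. Consequently $\gamma_k(G)\le \gamma_k(T)$, and it suffices to prove the bound when $G$ is a tree.

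For a tree $T$ on $n\ge k+1$ nodes, I would induct on $n$. The base case $n=k+1$ is immediate: such a tree has diameter at most $k$, so any single node is a $k$-distance dominating set, giving $\gamma_k(T)=1=n/(k+1)$. For the inductive step with $n\ge k+2$, I would consider a longest path $P:w_0 w_1\cdots w_d$ in $T$. If $d\le 2k$ then the radius of $T$ is at most $k$, so a central node alone $k$-dominates $T$ and $\gamma_k(T)\le 1\le n/(k+1)$ because $n\ge k+1$.

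The main case is $d\ge 2k+1$. Here I would delete the edge $w_k w_{k+1}$, obtaining two subtrees $T_1$ (containing $w_0,\ldots,w_k$) and $T_2$ (containing $w_{k+1},\ldots,w_d$). The key structural fact, which uses that $P$ is a longest path, is that every node $x\in V(T_1)$ lies within distance $k$ of $w_k$ in $T$: otherwise the path from $x$ to $w_d$ routed through $w_k$ would be longer than $P$, contradicting maximality. Hence $w_k$ alone $k$-dominates $T_1$. Moreover $|V(T_1)|\ge k+1$ (it contains $w_0,\ldots,w_k$) and $|V(T_2)|\ge d-k\ge k+1$, so the inductive hypothesis applies to $T_2$.

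Letting $D_2$ be a minimum $k$-distance dominating set of $T_2$, I claim $\{w_k\}\cup D_2$ is a $k$-distance dominating set of $T$. Indeed $T_2$ is obtained from $T$ by a single edge deletion, so distances between nodes of $V(T_2)$ are the same in $T_2$ and in $T$, and $D_2$ therefore $k$-dominates $V(T_2)$ inside $T$ as well. This yields
\[
\gamma_k(T)\le 1+\gamma_k(T_2)\le 1+\frac{|V(T_2)|}{k+1}\le\frac{|V(T_1)|+|V(T_2)|}{k+1}=\frac{n}{k+1},
\]
where the last inequality uses $|V(T_1)|\ge k+1$. The only step I expect to need care is verifying that the two pieces $T_1$ and $T_2$ are large enough, in particular that $|V(T_2)|\ge k+1$ so that the induction hypothesis is applicable; this is precisely where the main-case assumption $d\ge 2k+1$ is exploited, and it is what makes splitting at $w_k$ (rather than at some other vertex of $P$) the right choice.
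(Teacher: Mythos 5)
Your proof is correct. Note, however, that the paper itself gives no proof of this statement: it is quoted verbatim from Meir and Moon \cite{mm}, so there is no internal argument to compare against. What you have written is essentially the classical tree-pruning induction that underlies the Meir--Moon bound: reduce to a spanning tree (distances only decrease when edges are added back, so $\gamma_k(G)\le\gamma_k(T)$), then repeatedly cut off the branch hanging at depth $k$ from one end of a longest path, observe that the cut vertex $w_k$ alone $k$-dominates the removed piece of at least $k+1$ vertices (else a path longer than the longest path would exist), and amortize the one dominator against those $k+1$ vertices. All the delicate points are handled: the unique path in a tree between two vertices of the same component of $T-w_kw_{k+1}$ stays in that component, so distances are preserved under the split; $|V(T_2)|\ge d-k\ge k+1$ in the main case, so the induction hypothesis applies; and the shallow cases ($n=k+1$, or $d\le 2k$) are dispatched by a single central vertex. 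This is a clean, self-contained substitute for the citation, and arguably a useful addition since the paper leans on this bound in two later results.
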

We now provide the following general bound for the burning number of graphs.

\begin{corollary}
If $G$ is a connected graph of order $n$, with $n\geq k+1$, then we have that
$$b(G) \leq 2\lceil{n}^{1/2}\rceil - 1.$$
\end{corollary}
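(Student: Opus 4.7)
The plan is to combine Lemma~\ref{mingammak} with Theorem~\ref{upgammak} and then optimize over $k$. From Lemma~\ref{mingammak} we have $b(G) \leq \gamma_k(G) + k$ for every $k \geq 1$ such that Theorem~\ref{upgammak} applies, and from Theorem~\ref{upgammak} we get $\gamma_k(G) \leq n/(k+1)$ as long as $n \geq k+1$. Substituting gives
\[
b(G) \leq \frac{n}{k+1} + k
\]
for every admissible $k$. So the whole game is to choose $k$ so as to minimize the right-hand side.

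Treating $k$ as a real variable, the function $f(k) = \frac{n}{k+1} + k$ is minimized at $k+1 = \sqrt{n}$, giving $f(k) = 2\sqrt{n} - 1$. To stay integral, I would set $k = \lceil n^{1/2}\rceil - 1$; then $k+1 = \lceil n^{1/2}\rceil \geq n^{1/2}$, so
\[
\gamma_k(G) \leq \frac{n}{k+1} \leq n^{1/2} \leq \lceil n^{1/2}\rceil.
\]
Since $\gamma_k(G)$ is an integer, $\gamma_k(G) \leq \lceil n^{1/2}\rceil$, and therefore
\[
b(G) \leq \gamma_k(G) + k \leq \lceil n^{1/2}\rceil + \lceil n^{1/2}\rceil - 1 = 2\lceil n^{1/2}\rceil - 1,
\]
which is exactly the desired bound.

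The only things to check are that this $k$ is admissible in the two hypotheses. For Theorem~\ref{upgammak} we need $n \geq k+1 = \lceil n^{1/2}\rceil$, which clearly holds for all $n \geq 1$. For Lemma~\ref{mingammak} we need $k \geq 1$, i.e.\ $\lceil n^{1/2}\rceil \geq 2$, i.e.\ $n \geq 2$; the trivial case $n = 1$ is handled separately since then $b(G) = 1 = 2\lceil 1^{1/2}\rceil - 1$. There really is no hard step here: the only subtlety is taking care that when one rounds $\sqrt{n}$ up the estimate $n/(k+1) \leq \lceil n^{1/2}\rceil$ still holds, and the computation above shows that picking $k = \lceil n^{1/2}\rceil - 1$ makes both terms in $\gamma_k(G) + k$ at most $\lceil n^{1/2}\rceil$, with the $-1$ coming from the choice of $k$ itself.
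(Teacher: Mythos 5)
Your proposal is correct and follows essentially the same route as the paper: combine Lemma~\ref{mingammak} with Theorem~\ref{upgammak} to get $b(G)\leq \frac{n}{k+1}+k$ and then substitute $k=\lceil n^{1/2}\rceil-1$. You are in fact slightly more careful than the paper in justifying the final rounding step $\frac{n}{\lceil n^{1/2}\rceil}\leq \lceil n^{1/2}\rceil$ and in checking the admissibility of $k$, but the argument is the same.
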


\begin{proof}
By Lemma~\ref{mingammak} and Theorem~\ref{upgammak}, we derive that for any positive integer $k\leq n-1$ $$b(G)\leq \min_{k\geq 1}\left\{ \frac{n}{k+1} + k\right\}.$$
Now, the function $\frac{n}{k+1} + k$ is
minimized for $k=\lceil{n}^{1/2}\rceil - 1$, and we note that $k\leq n -1$. Therefore, we have that
\begin{align*}
b(G) & \leq \min_{k\geq 1}\left\{ \frac{n}{k+1} + k\right\}\\
& \leq \frac{n}{(\lceil{n}^{1/2}\rceil - 1)+1} + \lceil{n}^{1/2}\rceil - 1 \\
& \leq 2\lceil{n}^{1/2}\rceil - 1,
\end{align*}
and the proof follows.
\end{proof}
We conjecture that for any connected graph $G$ of order $n$, $b(G)\leq \lceil {n}^{1/2}\rceil$.

\section{Nordhaus-Gaddum Type Results}

Nordhaus and Gaddum~\cite{ng} gave bounds on the sum and product of the chromatic number of a graph and its complement, in terms of the order of the graph. Analogous relations have been discovered
for many other graph parameters; see \cite{ao} for a survey. In this section, we present Nordhaus-Gaddum type results for the burning number.

We need first the following simple observation. Let $G$ be a graph of order $n\geq 2$ with maximum degree $\Delta$. If $G$ does not have a universal node, then we have that $b(G)\leq n - \Delta$;
otherwise, $b(G) = 2$. It follows since we can take a node such as $v$ of degree $\Delta$, and then, by burning $v$ and $V(G) \setminus N[v]$, respectively, we burn all nodes of $G$ in at most $1 +
|V(G) \setminus N[v]| = n - \Delta$ steps. If $G$ contains a universal node $v$, then by burning $v$ and one of its neighbours respectively, we can burn $G$ in two steps (note that since $G$ has an
edge, we need at least two steps for burning $G$). Thus, $b(G) =2$. Also, we need the following theorem from \cite{bjr}.
\begin{theorem}[\cite{bjr}]\label{b222}
A graph $G$ satisfies $b(G)=2$ if and only if $G$ has order at least $2$, and has maximum degree $n-1$ or $n-2.$
\end{theorem}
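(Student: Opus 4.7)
The plan is to exploit the covering equation (\ref{eqq}) for the case $k=2$, which reads $N[x_1]\cup\{x_2\}=V(G)$ together with the distance condition $d(x_1,x_2)\geq 1$. This essentially pins down the structure of any graph with $b(G)=2$, and makes both directions short.

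For the forward direction, assume $(x_1,x_2)$ is an optimal burning sequence with $b(G)=2$, and split into two cases depending on whether $x_1x_2\in E(G)$. If $x_1x_2\in E(G)$, then $x_2\in N[x_1]$, so the covering equation forces $N[x_1]=V(G)$, and $x_1$ is a universal node, giving $\Delta(G)=n-1$. If $x_1x_2\notin E(G)$, then $x_2\notin N[x_1]$, so $N[x_1]=V(G)\setminus\{x_2\}$ and $\deg(x_1)=n-2$, giving $\Delta(G)\geq n-2$. Combined with the trivial observation that $n\geq 2$ (since $b(G)\geq 2$ requires at least two nodes), we obtain $\Delta(G)\in\{n-1,n-2\}$.

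For the converse, I would exhibit explicit burning sequences of length $2$ in each case. If $G$ has a node $v$ of degree $n-1$, burn $v$ in round~$1$; then in round~$2$ every other node is a neighbour of $v$ and so is burned by fire spread, and we may burn any remaining unburned node as our second source. If $\Delta(G)=n-2$, pick $v$ of degree $n-2$ and let $u$ be the unique node (other than $v$) not adjacent to $v$; burning $v$ in round~$1$ and $u$ in round~$2$ burns all of $V(G)$ by the end of round~$2$. In either case $b(G)\leq 2$, and since $n\geq 2$ we also have $b(G)\geq 2$, so $b(G)=2$.

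There is no real obstacle here: the result is essentially a direct decoding of equation (\ref{eqq}) in the case $k=2$. The only point requiring minor care is to make sure the observation about $n\geq 2$ and $b(G)\geq 2$ is recorded so that the lower bound in the converse direction is justified, and to remember that in the $\Delta=n-2$ case the non-neighbour $u$ of $v$ is unique and distinct from $v$, which is what makes the two-step sequence $(v,u)$ valid.
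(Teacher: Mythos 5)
Your proof is correct. One thing to be aware of: the paper does not actually prove Theorem~\ref{b222} --- it is quoted from \cite{bjr} without proof --- so there is no in-paper argument to compare yours against line by line. Your argument is the natural one and is complete: decoding equation~(\ref{eqq}) for $k=2$ as $N[x_1]\cup\{x_2\}=V(G)$ immediately gives $\deg(x_1)\geq n-2$ in the forward direction (your case split on whether $x_1x_2\in E(G)$ is fine, though all you really need is $|N[x_1]|\geq n-1$), and the converse is exactly the observation the paper records in the paragraph immediately preceding the theorem, namely that $b(G)\leq 1+|V(G)\setminus N[v]|=n-\Delta$ by burning a maximum-degree node $v$ first and that $b(G)=2$ when $v$ is universal. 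You correctly record the lower bound $b(G)\geq 2$ for $n\geq 2$, which closes the converse. No gaps.
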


We first present some bounds on the sum of the burning
numbers of a graph and its complement.
\begin{theorem}\label{sumcomp}
If $G$ is a graph of order $n\geq 2$, then $$4 \le b(G) + b(\overline{G})\leq n + 2.$$
\end{theorem}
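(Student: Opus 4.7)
The plan is to lean on two facts already in the paper: that $b(H)\geq 2$ for every graph $H$ of order at least $2$, and the observation (just before Theorem~\ref{b222}) that $b(H)\leq n-\Delta(H)$ whenever $H$ has no universal node, and $b(H)=2$ otherwise. Together with the elementary estimate $b(H)\le n$ (burn one fresh node per round), these give both inequalities after a short case split.

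For the lower bound, I would simply note that since $n\geq 2$, both $G$ and $\overline{G}$ have order at least $2$, so $b(G)\geq 2$ and $b(\overline{G})\geq 2$, yielding $b(G)+b(\overline{G})\geq 4$.

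For the upper bound, I would split into two cases according to whether a universal node exists. First, if $G$ has a universal node, then $b(G)=2$, and since we always have $b(\overline{G})\leq n$ (the trivial bound obtained by burning one node per round), we get $b(G)+b(\overline{G})\leq n+2$. The case in which $\overline{G}$ has a universal node (equivalently, $G$ has an isolated node) is symmetric. Second, if neither $G$ nor $\overline{G}$ has a universal node, apply the pre-Theorem~\ref{b222} observation to both to obtain
\begin{equation*}
b(G)+b(\overline{G})\;\leq\;(n-\Delta(G))+(n-\Delta(\overline{G}))\;=\;2n-\bigl(\Delta(G)+\Delta(\overline{G})\bigr).
\end{equation*}
The key inequality is $\Delta(G)+\Delta(\overline{G})\geq n-1$: pick any node $v$ of maximum degree in $G$; then $\deg_{\overline{G}}(v)=n-1-\Delta(G)$, so $\Delta(\overline{G})\geq n-1-\Delta(G)$. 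This yields $b(G)+b(\overline{G})\leq n+1\leq n+2$, finishing the case.

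There is no serious obstacle here; the only subtlety is to remember the universal-node case separately, because the bound $b(H)\leq n-\Delta(H)$ was only asserted in the non-universal case, and to invoke the trivial $b(\overline{G})\leq n$ to complete that case.
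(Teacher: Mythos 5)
Your proof is correct, and the upper bound is organized genuinely differently from the paper's. The paper fixes an optimal burning sequence $(x_1,\ldots,x_k)$ of $G$, notes that $d_G(x_i,x_k)\ge k-i\ge 2$ forces $x_k$ to be non-adjacent in $G$ to $x_1,\ldots,x_{k-2}$, hence $\Delta(\overline{G})\ge \deg_{\overline{G}}(x_k)\ge k-2$, and then applies the observation $b(H)\le n-\Delta(H)$ only to $\overline{G}$ to get $b(G)+b(\overline{G})\le k+n-(k-2)=n+2$, treating the isolated-node case separately. You never look inside a burning sequence: you apply that same observation to both $G$ and $\overline{G}$ and close with the complementary-degree identity $\Delta(\overline{G})\ge n-1-\Delta(G)$, which in the case where neither graph has a universal node yields the slightly stronger bound $b(G)+b(\overline{G})\le n+1$; this is consistent with the extremal example $K_n$, which lands in your universal-node case where the sum can reach $n+2$. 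What the paper's route buys is a direct view of where the $+2$ comes from (the last source can be adjacent in $G$ to at most $x_{k-1}$); what yours buys is symmetry, brevity, and the sharper constant off the extremal case. Your lower-bound argument ($b(H)\ge 2$ for any graph on at least two nodes, applied to both $G$ and $\overline{G}$) is also cleaner than the paper's appeal to both graphs having an edge, which is not literally true for $G=K_n$ but is not needed anyway.
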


\begin{proof}
Suppose that $b(G) = k$, and $(x_1, x_2, \ldots, x_k)$ is a burning sequence for $G$. Clearly, $x_k$ cannot be adjacent to $x_i$, for $1\leq i\leq k-2$. Therefore, $\Delta(\overline{G})\geq
d_{\overline{G}}(x_k)\geq k-2$. If $G$ does not have an isolated node, then $\overline{G}$ does not have a universal node. Thus, by the above observation, $b(\overline{G}) \leq n -
\Delta(\overline{G}) \leq n -(k-2)$, and consequently, we have that $b(\overline{G}) + b(G) \leq n + 2$. Moreover, in such a case, both $G$ and $\overline{G}$ must have at least one edge. Thus,
$b(\overline{G}) + b(G) \geq 2+2 = 4$.

If $G$ has an isolated node $v$, then $\overline{G}$ must have a universal node. Therefore, $b(\overline{G})=2$, and clearly, $b(G) \leq n$. Thus, $b(\overline{G}) + b(G) \leq n + 2$. On the other
hand, since by assumption, $G$ has at least two nodes and one of them is isolated, then $b(G) \geq 2$. Hence, $b(\overline{G}) + b(G) \geq 2+2 = 4$.
\end{proof}

The upper bound in Theorem \ref{sumcomp} is the best possible, since for the complete graph $K_n$ we have that $ b(\overline{K_n}) + b(K_n) = n +2$. However, there are cases where the upper bound is
strict; for example, $b(C_n) + b(\overline{C_n}) = \lceil {n}^{1/2}\rceil + 3 < n + 2$. Also, the lower bound in Theorem \ref{sumcomp} is achieved for the complete graph $K_2$, and star graphs
$K(1,s)$.

We cite here two useful Nordhaus-Gaddum type results for distance domination.

\begin{theorem}\cite{ao}\label{gadamdistance1}
For any graph $G$ of order $n\geq k+1$ with $k\geq 2$, we have that $$\gamma_k(G) + \gamma_k(\overline{G})\leq n+1$$ and $$\gamma_k(G)\gamma_k(\overline{G})\leq n. $$
\end{theorem}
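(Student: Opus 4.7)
The plan is to establish the dichotomy that either $\min\{\gamma_k(G), \gamma_k(\overline{G})\} = 1$, or else both quantities equal $2$, and then combine this with the trivial bound $\gamma_k(H) \leq |V(H)|$. The key observation to prove is that $\gamma_k(G) \geq 2$ forces the existence of two vertices $u, v$ with $d_G(u,v) \geq k+1 \geq 3$; indeed, if no such pair existed, every vertex would be within distance $k$ of every other and a single vertex would form a $k$-distance dominating set.

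Given such a pair $u, v$ with $d_G(u,v) \geq 3$, the vertices $u$ and $v$ are non-adjacent in $G$ and share no common $G$-neighbour. Consequently, in $\overline{G}$, the edge $uv$ is present, and every remaining vertex $w$ is adjacent in $\overline{G}$ to at least one of $u, v$ (otherwise $w$ would be a common $G$-neighbour of $u$ and $v$). This shows that $\{u, v\}$ dominates $\overline{G}$, so
\begin{equation*}
\gamma_k(\overline{G}) \leq \gamma_1(\overline{G}) \leq 2.
\end{equation*}
By interchanging the roles of $G$ and $\overline{G}$, the same reasoning shows that $\gamma_k(\overline{G}) \geq 2$ forces $\gamma_k(G) \leq 2$, which completes the dichotomy.

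With the dichotomy in hand, the argument splits into two cases. If $\gamma_k(G) = 1$ (or symmetrically $\gamma_k(\overline{G}) = 1$), then the trivial bound $\gamma_k \leq n$ immediately yields $\gamma_k(G) + \gamma_k(\overline{G}) \leq n+1$ and $\gamma_k(G)\gamma_k(\overline{G}) \leq n$. Otherwise both $\gamma_k$-values equal $2$, giving sum $4$ and product $4$, and these satisfy the desired inequalities as soon as $n \geq 4$ (the hypothesis $n \geq k+1 \geq 3$ already handles the sum bound).

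The main subtlety, and the only place where some care is needed, is the product bound in the boundary case $n = 3$ (which forces $k = 2$). Here I would verify by a short direct check that the ``both $\gamma_k$-values at least $2$'' branch cannot occur: on three vertices any connected graph has diameter at most $2$ and hence $\gamma_2 = 1$, while $G$ and $\overline{G}$ cannot simultaneously be disconnected. Thus the first case always applies when $n = 3$, and $\gamma_2(G)\gamma_2(\overline{G}) \leq 1 \cdot 3 = 3 = n$. This low-order enumeration is the hardest part of the argument, but is forced once the dichotomy is established.
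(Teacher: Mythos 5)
Your proof is correct. Note that the paper does not prove this statement at all: it is quoted from the Aouchiche--Hansen survey \cite{ao}, so there is no in-paper argument to compare against. Your self-contained argument is the standard one for Nordhaus--Gaddum bounds on (distance) domination: the key step is that $\gamma_k(G)\geq 2$ forces a pair of vertices $u,v$ at distance at least $k+1\geq 3$ in $G$, whence $u$ and $v$ are nonadjacent with no common neighbour in $G$, so $\{u,v\}$ dominates $\overline{G}$ at distance $1$ and $\gamma_k(\overline{G})\leq\gamma_1(\overline{G})\leq 2$. The resulting dichotomy (one of the two parameters equals $1$, or both equal $2$), combined with the trivial bound $\gamma_k\leq n$ and your direct check of the boundary case $n=3$, $k=2$ (where at least one of $G,\overline{G}$ is connected of diameter at most $2$ and hence has $\gamma_2=1$), yields both inequalities. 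One small point worth making explicit: the pair $u,v$ witnessing $\gamma_k(G)\geq 2$ may lie in different components of $G$, so $d_G(u,v)$ is infinite rather than a finite number at least $k+1$; this is harmless since your argument only uses that $u$ and $v$ are nonadjacent and share no $G$-neighbour. In short, you have supplied a clean and complete elementary proof of a result the paper merely cites.
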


\begin{theorem}\cite{ao}\label{gadamdistance2}
If $G$ and $\overline{G}$ are both connected with $n\geq k+1$ nodes for integer $k\geq 2$, then $\gamma_k(G) + \gamma_k(\overline{G})\leq \frac{n}{k+1} + 1$ and
$\gamma_k(G)\gamma_k(\overline{G})\leq \frac{n}{k+1}$.
\end{theorem}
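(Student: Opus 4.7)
My plan is to reduce both inequalities to the claim that at least one of $\gamma_k(G)$ and $\gamma_k(\overline{G})$ equals $1$. Once this is established, Theorem~\ref{upgammak} applied to the other graph gives an upper bound of $\tfrac{n}{k+1}$ on the remaining factor, and the desired bounds
\[
\gamma_k(G)+\gamma_k(\overline{G})\le \tfrac{n}{k+1}+1, \qquad \gamma_k(G)\cdot\gamma_k(\overline{G})\le \tfrac{n}{k+1}
\]
follow immediately.

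The heart of the argument is then a structural lemma: if $G$ has $\mathrm{rad}(G)\ge 3$, then $\mathrm{rad}(\overline{G})\le 2$. To prove it, I would pick vertices $x,y$ with $d_G(x,y)\ge 3$, which exist because $\mathrm{rad}(G)\ge 3$ forces $\mathrm{diam}(G)\ge 3$. The distance hypothesis means $xy\notin E(G)$ and $x,y$ have no common neighbour in $G$; equivalently, $xy\in E(\overline{G})$, and for every other vertex $z$ at least one of $xz,yz$ is in $E(\overline{G})$. Thus every vertex of $\overline{G}$ is within distance two of $x$ in $\overline{G}$ (either a neighbour of $x$, or a neighbour of $y$ reached via the edge $xy$), so $\mathrm{rad}(\overline{G})\le 2$ as claimed.

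With the lemma in hand, I split into cases. If $\gamma_k(G)=1$, the reduction in the first paragraph finishes the proof, and symmetrically if $\gamma_k(\overline{G})=1$. Otherwise $\gamma_k(G)\ge 2$, meaning no single vertex can $k$-dominate $G$, so $\mathrm{rad}(G)\ge k+1\ge 3$, where the last inequality uses the hypothesis $k\ge 2$. The lemma then gives $\mathrm{rad}(\overline{G})\le 2\le k$, so any central vertex of $\overline{G}$ is on its own a $k$-distance dominating set of $\overline{G}$, forcing $\gamma_k(\overline{G})=1$. We are back in the first case, and applying Theorem~\ref{upgammak} to $G$ closes the argument.

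The main obstacle is the structural lemma; the two-vertex trick above dispatches it, and the rest is bookkeeping plus one appeal to Theorem~\ref{upgammak}. Note that the assumption $k\ge 2$ is used in exactly one place, namely to promote $\mathrm{rad}(G)\ge k+1$ to $\mathrm{rad}(G)\ge 3$, which is what triggers the lemma; the assumption that $\overline{G}$ is connected is used implicitly to ensure $\gamma_k(\overline{G})$ is well-defined from a single centre, while the connectedness of $G$ (together with $n\ge k+1$) is what Theorem~\ref{upgammak} requires on the side that is not trivially dominated.
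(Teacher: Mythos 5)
Your proof is correct, but there is no in-paper proof to compare it with: the paper states Theorem \ref{gadamdistance2} as a quoted result from the survey \cite{ao} and gives no argument of its own. Taken on its own terms, your derivation is sound and self-contained. The reduction to the claim that at least one of $\gamma_k(G)$, $\gamma_k(\overline{G})$ equals $1$ works because $\gamma_k(G)\ge 2$ is equivalent to every vertex of $G$ having eccentricity greater than $k$, that is, $\mathrm{rad}(G)\ge k+1\ge 3$; your structural lemma is the classical observation that a pair $x,y$ with $d_G(x,y)\ge 3$ is nonadjacent in $G$ and has no common $G$-neighbour, so in $\overline{G}$ every other vertex is adjacent to $x$ or to $y$, and the edge $xy$ of $\overline{G}$ places every vertex within distance $2$ of $x$; hence $\mathrm{rad}(\overline{G})\le 2\le k$ and $\gamma_k(\overline{G})=1$. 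Applying Theorem \ref{upgammak} to the other graph (connected of order $n\ge k+1$ by hypothesis) then yields both the sum and the product bounds, and the degenerate case where both parameters equal $1$ is absorbed because $n\ge k+1$ gives $n/(k+1)\ge 1$. You also correctly isolate the role of the hypothesis $k\ge 2$: for $k=1$ the reduction genuinely fails (for example, $C_5$ is self-complementary with $\gamma_1(C_5)=2$, so the sum is $4$ while $n/2+1=3.5$), so that assumption is essential and not merely technical.
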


We now have the following result for the product of the burning numbers of a graph and its complement.

\begin{theorem}\label{bb}
For any graph $G$ of order $n\ge 6$, we have $b(G)b(\overline{G})\leq 2n$, and the equality is achieved by complete graphs.
\end{theorem}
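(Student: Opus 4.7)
First, the equality claim is immediate: Theorem~\ref{b222} gives $b(K_n)=2$, and every node of $\overline{K_n}$ is isolated, so each must be chosen as its own source, yielding $b(\overline{K_n})=n$ and product $2n$. For the upper bound, I would split on whether $\min(b(G), b(\overline{G}))$ equals $2$: if so, the trivial bound $b\le n$ for the other factor immediately gives $b(G)b(\overline{G})\le 2n$. Otherwise, both burning numbers are at least $3$, so by Theorem~\ref{b222} both graphs have maximum degree at most $n-3$, and in particular neither has an isolated node. Since at most one of $G$ and $\overline{G}$ can be disconnected, two further subcases remain.

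When both $G$ and $\overline{G}$ are connected, I would apply Lemma~\ref{mingammak} with $k=2$ to get $b(G)\le \gamma_2(G)+2$ and $b(\overline{G})\le \gamma_2(\overline{G})+2$, then invoke Theorem~\ref{gadamdistance2} with $k=2$ to obtain $\gamma_2(G)\gamma_2(\overline{G})\le n/3$ and $\gamma_2(G)+\gamma_2(\overline{G})\le n/3+1$. Expanding $(\gamma_2(G)+2)(\gamma_2(\overline{G})+2)$ and substituting gives $b(G)b(\overline{G})\le n+6$, which is at most $2n$ exactly when $n\ge 6$.

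When one of the graphs, say $G$, is disconnected, the standard observation that the complement of a disconnected graph has diameter at most $2$ (two vertices of $\overline{G}$ in different components of $G$ are adjacent; two in the same component connect through any vertex of another component) gives $r(\overline{G})\le 2$, so Lemma~\ref{rad} yields $b(\overline{G})\le 3$. Applying Theorem~\ref{upgammak} with $k=1$ to each component of $G$ (each has order at least~$2$ since $G$ has no isolated node) and summing gives $\gamma_1(G)\le n/2$; then Lemma~\ref{mingammak} with $k=1$ gives $b(G)\le n/2+1$. Hence $b(G)b(\overline{G})\le 3(n/2+1)=(3n+6)/2\le 2n$, again precisely when $n\ge 6$.

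The main obstacle is that the sum bound $b(G)+b(\overline{G})\le n+2$ from Theorem~\ref{sumcomp} alone permits a product as large as $(n+2)^2/4$, which exceeds $2n$ for every $n\ge 6$, so a naive AM--GM argument fails. One must instead route through the distance-domination Nordhaus-Gaddum inequalities, which are tight enough only when both graphs are connected, and handle the disconnected subcase separately using the fact that the complement then has very small radius. The hypothesis $n\ge 6$ is precisely the threshold at which both subcase bounds simultaneously drop to at most $2n$.
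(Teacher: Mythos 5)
Your proof is correct and follows essentially the same route as the paper: the connected case via Lemma~\ref{mingammak} with $k=2$ combined with Theorem~\ref{gadamdistance2}, and the disconnected case via the radius-$2$ bound on the complement together with a per-component distance-domination estimate. The only differences are cosmetic: you dispose of the troublesome small components up front by splitting on whether some burning number equals $2$ (the paper instead treats a component of order at most $2$ in $\overline{G}$ as a separate subcase via Theorem~\ref{b222}), and you use $k=1$ rather than $k=2$ domination in the disconnected case, which still lands under $2n$ exactly at the threshold $n\ge 6$.
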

\begin{proof}
By Corollary \ref{mingammak}, we have that $b(G)\leq \gamma_k(G) + k$, for $k\geq 2$ with $n\geq k+1$. Thus, we have that
\begin{align*}
b(G)b(\overline{G}) &\leq (\gamma_k(G) + k)(\gamma_k(\overline{G}) + k)\\
&\leq \gamma_k(G)\gamma_k(\overline{G}) + k\gamma_k(G) + k\gamma_k(\overline{G}) + k^2.
\end{align*}
Now, if $G$ and $\overline{G}$ are both connected, then using Theorem \ref{gadamdistance2}, we have that
\begin{align*}
b(G)b(\overline{G})& \leq \gamma_k(G)\gamma_k(\overline{G}) + k\gamma_k(G) + k\gamma_k(\overline{G}) + k^2\\
&\leq \frac{n}{k+1} + k \left(\frac{n}{k+1} + 1\right) + k^2 \\
& = n + k + k^2.
\end{align*}
By taking $k=2$, the above inequality implies that $b(G)b(\overline{G}) \leq n + 6$.

If $G$ is connected while $\overline{G}$ is disconnected, then, either $\overline{G}$ has a component with at most two nodes, or every component of
$\overline{G}$ has at least three nodes. If $\overline{G}$ has a component with at most two nodes, then $G$ must have either a universal node, or it must contain two nodes
such as $u$ and $v$ (corresponding to the component of $\overline{G}$ with exactly two nodes), such that $G = N[u] \cup \{v\}$. Thus, by Theorem \ref{b222}, $b(G)=2$, and obviously
$b(\overline{G})\leq n$. Hence, in this case, $b(G)b(\overline{G})\leq 2n$.

Now, suppose that $G_1, G_2, \ldots, G_t$ are the components of $\overline{G}$ with $n_1, n_2, \ldots, n_t$ nodes, respectively, where each $n_i\geq 3$. By Theorem \ref{upgammak}, and taking $k=2$, we know that
\begin{align*}
\gamma_2(\overline{G}) & = \gamma_2(G_1) +  \gamma_2(G_2) + \cdots +  \gamma_2(G_t)\\
& \leq \frac{n_1}{3} + \frac{n_2}{3} + \cdots + \frac{n_t}{3}\\
& = \frac{n}{3}.
\end{align*}
Also, note that $b(G)\leq 3$, since we can easily see that in such a case the radius of $G$ is at most $2$. Therefore, by Theorem \ref{gamb} and Theorem \ref{upgammak}, for $k=2$, we have that
\begin{align*}
b(G)b(\overline{G})& \leq 3(\gamma_2(\overline{G}) + 2) \\
&\leq 3 \left(\frac{n}{3} +  2\right) \\
& = n + 6,
\end{align*}
and the proof follows.
\end{proof}

\begin{corollary}
If graphs $G$ and $\overline{G}$ are connected graphs of order $n\ge 6$, then
$b(G) + b(\overline{G})\leq 3\lceil n^{1/2}\rceil -1 $, and
$b(G)b(\overline{G})\leq n+6$.
\end{corollary}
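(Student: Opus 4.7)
The plan is to handle the two inequalities separately, in each case invoking Lemma \ref{mingammak} together with the Nordhaus--Gaddum estimate of Theorem \ref{gadamdistance2}.

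The product bound $b(G)b(\overline{G}) \le n+6$ is in fact already obtained inside the proof of Theorem \ref{bb}: the first case there (namely, $G$ and $\overline{G}$ both connected) established $b(G)b(\overline{G}) \le n + k + k^2$ for every admissible $k\ge 2$, and specializing to $k=2$ yields exactly $n+6$. Hence no new work is needed for this half of the statement.

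For the sum bound I would apply Lemma \ref{mingammak} to $G$ and to $\overline{G}$ at a common parameter $k$, producing
$$b(G)+b(\overline{G}) \le \gamma_k(G) + \gamma_k(\overline{G}) + 2k.$$
Since both graphs are connected, Theorem \ref{gadamdistance2} gives $\gamma_k(G) + \gamma_k(\overline{G}) \le \frac{n}{k+1}+1$, so
$$b(G)+b(\overline{G}) \le \frac{n}{k+1} + 2k + 1.$$
The natural choice that lands exactly on the stated bound is $k = \lceil n^{1/2}\rceil - 1$. Using $\lceil n^{1/2}\rceil^{2}\ge n$ we get $\frac{n}{k+1} = \frac{n}{\lceil n^{1/2}\rceil}\le \lceil n^{1/2}\rceil$, and substituting yields
$$b(G)+b(\overline{G}) \le \lceil n^{1/2}\rceil + 2(\lceil n^{1/2}\rceil - 1) + 1 = 3\lceil n^{1/2}\rceil - 1.$$

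The only issue to check is that the chosen $k$ is admissible for Theorem \ref{gadamdistance2}, that is, $k\ge 2$ and $n\ge k+1$. For $n\ge 6$ one has $\lceil n^{1/2}\rceil \ge 3$, so $k = \lceil n^{1/2}\rceil - 1 \ge 2$; and $n\ge \lceil n^{1/2}\rceil = k+1$ is immediate. So there is no real obstacle, and the proof reduces to a well-chosen substitution into the distance-domination bounds already developed in this section.
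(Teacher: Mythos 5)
Your proposal is correct and follows essentially the same route as the paper: apply the upper bound $b(G)\le\gamma_k(G)+k$ from Lemma~\ref{mingammak} to both graphs, invoke Theorem~\ref{gadamdistance2} with $k=\lceil n^{1/2}\rceil-1$ for the sum, and read off the product bound from the proof of Theorem~\ref{bb}. Your explicit check that this $k$ is admissible ($k\ge 2$ since $n\ge 6$) is a small detail the paper leaves implicit, but the argument is otherwise identical.
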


\begin{proof}
First, by Lemma \ref{gamb}, we have that
\begin{align*}
b(G) + b(\overline{G})& \leq (\gamma_k(G) +k) + (\gamma_k(\overline{G}) +k)\\
& = \gamma_k(G) + \gamma_k(\overline{G}) + 2k.
\end{align*}
By applying Theorem \ref{gadamdistance2} with $k= \lceil n^{1/2}\rceil - 1$, we conclude that
\begin{align*}
b(G) + b(\overline{G})& \leq \frac{n}{k+1} + 1 + 2k \\
& \leq 3\lceil n^{1/2} \rceil  -1.
\end{align*}
Finally, $b(G)b(\overline{G})\leq n+6$ follows from the proof of Theorem \ref{bb}.
\end{proof}

We conjecture that if $G$ and $\overline{G}$ are both connected graphs of order $n$, then we have that $b(G)b(\overline{G})\leq n + 4$.

\section{Burning in the ILT Model}

The \emph{Iterated Local Transitivity} (ILT) model was introduced in \cite{anppc}, and simulates
on-line social networks (or OSNs). The central idea behind the ILT model is
what sociologists call \emph{transitivity}: if $u$ is a friend of $v$, and $v
$ is a friend of $w,$ then $u$ is a friend of $w$. In its simplest form, transitivity gives rise to the
notion of \emph{cloning}, where $u$ is joined to all of the neighbours of $v$. In the ILT model, given some initial graph as a starting point, nodes are
repeatedly added over time which clone \emph{each} node, so that the new
nodes form an independent set. The only parameter of the model is the
initial graph $G_{0},$ which is any fixed finite connected graph. Assume
that for a fixed $t\geq 0,$ the graph $G_{t}$ has been constructed. To form $%
G_{t+1},$ for each node $x\in V(G_{t}),$ add its \emph{clone} $x^{\prime },$
such that $x^{\prime }$ is joined to $x$ and all of its neighbours at time $%
t.$ Note that the set of new nodes at time $t+1$ form an independent set of
cardinality $|V(G_{t})|.$

The ILT model shares many properties with OSNs such as low average distance, high clustering coefficient densification, and bad spectral expansion; see
\cite{anppc}. The ILT model has also been studied from the viewpoint of competitive diffusion which is one model of the spread of influence; see \cite{so}.

We have the following theorem about the burning number of graphs obtained based on ILT model.  Even though the graphs generated by the ILT model grow exponentially in order with $t$, we see that the
burning number of such networks remains constant.
\begin{theorem}\label{ilttt}
Let $G_t$ be the graph generated at time $t\geq 1$ based on the ILT model with initial graph $G_0$. If $G_0$ has an optimal burning sequence $(x_1, x_2, \ldots, x_k)$ in which $x_k$ has a neighbor that is
burned in the $(k-1)$-th step, then $b(G_t) = b(G_0)$. Otherwise, $b(G_t) = b(G_0) + 1$.
\end{theorem}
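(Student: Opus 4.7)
The plan is to prove the theorem for $t=1$ first and then extend by induction on $t$. Two structural facts about the ILT graphs, both established by induction on $t$, drive the argument: (i) $V(G_0)$ embeds isometrically in $G_t$, and (ii) every vertex $x\in V(G_t)\setminus V(G_0)$ has a canonical ``ancestor'' $\pi(x)\in V(G_0)$ with $x\sim\pi(x)$ in $G_t$ and $d_{G_t}(x,u)=d_{G_0}(\pi(x),u)$ for every $u\in V(G_0)\setminus\{\pi(x)\}$. Fact~(i) together with Theorem~\ref{isometric1} applied with $f_r(x)=\pi(x)$ gives the lower bound $b(G_0)\le b(G_t)$ for free.

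For the upper bound we run the optimal $G_0$-burning sequence $(x_1,\ldots,x_k)$ inside $G_t$. The distance identities imply that every $v\in V(G_0)$ and every descendant of any $v\ne x_k$ burns by step $k$. The only delicate vertices are the descendants of $x_k$: since $d_{G_t}(x_k,x)=1$ for every such $x\ne x_k$, fire from $x_k$ reaches them only at step $k+1$, while fire from an earlier source $x_i$ reaches them by step $k$ if and only if $d_{G_0}(x_i,x_k)=k-i$, i.e.\ if and only if some neighbour of $x_k$ is burned at step $k-1$ in $G_0$. Thus, when the hypothesis holds, the same sequence burns $G_t$ in $k$ steps and $b(G_t)\le k$. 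When it fails, $d_{G_0}(x_i,x_k)\ge k-i+1$ for every $i<k$, so appending any descendant $x_k^{\ast}$ of $x_k$ yields a valid burning sequence of length $k+1$ for $G_t$---the new distance condition $d_{G_t}(x_i,x_k^{\ast})=d_{G_0}(x_i,x_k)\ge k+1-i$ is satisfied for $i<k$, and the ball $N_1[x_k]$ sweeps up the remaining descendants at step $k+1$---so $b(G_t)\le k+1$.

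The harder direction is the matching lower bound $b(G_t)\ge k+1$ when the hypothesis fails, which is argued by contradiction. Suppose $(y_1,\ldots,y_k)$ is an optimal burning sequence for $G_t$ and set $z_i=\pi(y_i)\in V(G_0)$; the distance identities show that $(z_1,\ldots,z_k)$ covers $V(G_0)$ by $G_0$-balls of radii $k-1,\ldots,0$, so by Corollary~\ref{cov3} it is an optimal covering. The key subclaim is that we may take the $z_i$ to be pairwise distinct, after which $(z_1,\ldots,z_k)$ becomes a genuine optimal burning sequence for $G_0$. Then, in order that every descendant of $z_k$ be burned by step $k$ in $G_t$, some earlier $z_i$ must satisfy $d_{G_0}(z_i,z_k)\le k-i$, which by the burning-sequence inequality is forced to be an equality; a shortest $z_i$--$z_k$ path then supplies a neighbour of $z_k$ burned at step $k-1$ in $G_0$, contradicting the failure of the hypothesis.

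Finally, the induction step $t\to t+1$ follows by reapplying the $t=1$ case to the ILT pair $(G_t,G_{t+1})$ with initial graph $G_t$. It suffices to verify that the optimal sequence produced at each stage satisfies the hypothesis in its own graph: when the hypothesis held at $G_0$, the sequence $(x_1,\ldots,x_k)\subseteq V(G_0)\subseteq V(G_t)$ still has its prescribed neighbour of $x_k$ burned at step $k-1$; when it failed, the sequence $(x_1,\ldots,x_k,x_k^{\ast})$ trivially satisfies the hypothesis because $x_k$ is a neighbour of $x_k^{\ast}$ that burns at step $k$. The main obstacle is expected to be the ``distinct projections'' subclaim in the hard direction: for $t\ge 2$, a vertex of $V(G_0)$ has many descendants, so multiple $y_i$'s may project to the same $z$; one must show that such repetitions can be eliminated, using the fact that distances between two descendants of a common $V(G_0)$-vertex are at most $2$ (so any duplicate pair occupies positions differing by at most $2$) and that the joint coverage of such a pair is contained in a single ball around their common ancestor.
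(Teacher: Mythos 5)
Your overall architecture is the same as the paper's: the lower bound $b(G_0)\le b(G_t)$ via Theorem~\ref{isometric1} applied to clones (the paper does this one generation at a time, $G_{t-1}\hookrightarrow G_t$, rather than directly from $G_0$, but the content is identical); the upper bound by running the $G_0$-sequence inside $G_t$ and, when the hypothesis fails, appending one descendant of $x_k$; and the hard direction by converting an optimal burning sequence of $G_t$ into an optimal burning sequence of the smaller graph and reading off a neighbour of the last source burned at step $k-1$. Your explicit $t\to t+1$ bookkeeping (checking that the sequence produced at each stage satisfies the hypothesis \emph{in its own graph}) is actually cleaner than the paper's rather terse appeal to induction, and the observation that the hypothesis is equivalent to $d(x_i,x_k)=k-i$ for some $i<k$ is a nice way to organize the upper-bound case split.

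The gap is exactly where you say it is, and it is not a small one: the ``distinct projections'' subclaim is the entire technical content of the paper's proof, which devotes a multi-case algorithm (Steps 1 through 3.2.2, including a termination argument) to precisely this conversion. Your sketch does not close it. Knowing that duplicates $\pi(y_i)=\pi(y_j)$ force $|i-j|\le 2$ and that $N_{k-i}[y_i]\cup N_{k-j}[y_j]\cap V(G_0)\subseteq N^{G_0}_{k-i}[\pi(y_i)]$ lets you \emph{delete} the later ball from the covering, but the contradiction you want requires a length-$k$ burning \emph{sequence} for $G_0$, i.e.\ $k$ pairwise distinct vertices satisfying all constraints $d(z_i,z_j)\ge|i-j|$. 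So the vacated position $j$ must be refilled by some vertex of $G_0$ that is unburned at stage $j$, is not already among the other sources, and does not violate the distance constraints with sources on both sides of it; when no such vertex exists one must shift the tail of the sequence forward and append a fresh final source, and one must show this repair process terminates rather than cycling. (This is Step 2.2.2 of the paper and the paragraph after the algorithm.) Note also that a covering of $V(G_0)$ by $k-1$ balls of radii $k-1,\dots,k-j+1,k-j-1,\dots,0$ does \emph{not} feed into Theorem~\ref{cov2} to give $b(G_0)\le k-1$ — the radii are off by one — so you cannot escape the repair argument by claiming the duplicate case is outright impossible. Until this subclaim is carried out, the lower bound $b(G_t)\ge b(G_0)+1$ in the ``otherwise'' case is unproven.
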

\begin{proof}
First, assume that $(x_1, x_2,\ldots, x_k)$ is an
optimal burning sequence for $G_0$. Since every node $x'\in V(G_t) \setminus  V(G_0)$, with $t\geq 1$, is adjacent to a node in $G_0$, we have that $(x_1, x_2,\ldots, x_k)$ is also a burning sequence for the subgraph of $G_t$
induced by $V(G_t) \setminus (N^{G_t}[x_k] \setminus N^{G_0}[x_k])$. Thus, $b(G_t)\leq b(G_0) + 1$. With a similar argument, we conclude that $b(G_t)\leq b(G_{t-1}) +1 $.

On the other hand, we can easily see that $G_{t-1}$ is an isometric subgraph of $G_t$, for any $t\geq 1$. Also, for any $x\in V(G_{t-1})$ and its clone $x' \in V(G_t)$, we have that $N^{G_t}[x] =
N^{G_t}[x']$. Thus, for any $r\geq 1$, $N_r[x'] \cap V(G_{t-1}) = N^{G_{t-1}}_r[x]$. Therefore, by Theorem \ref{isometric1}, $b(G_t)\geq b(G_{t-1})$. Hence, by induction we conclude that $b(G_t) \geq
b(G_0)$, for any $t\geq 1$, and therefore, we have that either $b(G_t) = b(G_0)$, or $b(G_t) = b(G_0) + 1$. We now characterize where $b(G_t)$ equals $b(G_0)$ or $b(G_t) = b(G_0) + 1$ as follows.

Let $(x_1, x_2,\ldots, x_k)$ be an optimal  burning sequence for $G_t$.
By the following algorithm, we find a burning sequence $(y_1, y_2,\ldots, y_k)$ for $G_{t}$ where at least all the first $k-1$ fire sources are in $G_{t-1}$.

\medskip

{\bf Step 1.} If $x_1 \in V(G_{t-1})$, then we take $y_1 = x_1$, and $x_i = x_i$ for $2\leq i\leq k$. Go to Step 2.

If for some node $x'_1 \in V(G_{t-1})$, the node $x_1$ is the clone of $x'_1$, then by Corollary \ref{cone}, we set $(y_1 = x'_1, x_2 = x_2, \ldots, x_k = x_k)$ as a new burning sequence for $G_t$. Go to Step 2.

\smallskip

{\bf Step 2.} For $2\leq i \leq k-1$, perform the following steps.

\smallskip

{\bf Step 2.1.} If $x_i \in V(G_{t-1})$, then we take $y_i = x_i$, and we set $x_j = x_j$ for $j > i$.
%Go to Step 2.

\smallskip

{\bf Step 2.2.} If $x_i \in V(G_{t}) \setminus V(G_{t-1})$ is the clone of $x'_i \in V(G_{t-1})$, then we have two possibilities; either $x'_i \neq y_{i-1}$, or $x'_i = y_{i-1}$.

\smallskip

{\bf Step 2.2.1.} If $x'_i \neq y_{i-1}$ and $x'_i \neq x_{i+1}$, then using Corollary \ref{cone}, we set $(y_1, \ldots, y_{i-1}, y_i = x'_i, x_{i+1} = x_{i+1}, \ldots, x_k = x_k)$ as a new burning
sequence for $G_t$.
%Go to Step 2.

\smallskip

If $x'_i \neq y_{i-1}$ and $x'_i = x_{i+1}$, then we set $(y_1, \ldots, y_{i-1}, y_i = x'_i , x_{i+1} = x, x_{i+2} = x_{i+2}, \ldots, x_k = x_k)$ as a new burning sequence for $G_t$, in which $x$ is
an unburned node at stage $i$ in $G_{t-1}$ such that $x\neq x_j$, for any $j> i$, if such a node is available. We can do this since $N[x_i] = N[x'_i] = N[x_{i+1}]$, and burning two successive fire
sources that have exactly the same closed neighbourhoods does not change the course of fire. In fact, it may even slow down the burning process.
%Go to Step 2.

If such a node $x$ is not available, then we set $(y_1, \ldots, y_{i-1}, y_i = x'_i , x_{i+1} = x_{i+2},  \ldots, x_{k-1} = x_k,  x_k =z)$ as a new burning sequence for $G_t$, in which $z$ is an
unburned node at step $k-1$. We know that such a node does exist, since by assumption, $b(G_t) = k$.
%Go to Step 2.

\smallskip

{\bf Step 2.2.2.} If $x'_i = y_{i-1}$, then we set $(y_1, \ldots, y_{i-1}, y_i = x, x_{i+1} = x_{i+1}, \ldots, x_k = x_k)$ as a burning sequence for $G_t$, in which $x$ is an unburned node at stage
$i-1$ in $G_{t-1}$ such that $x\neq x_j$, for any $j> i$, if such a node is available. We can do this since $N[x_i] = N[x'_i] = N[y_{i-1}]$, and burning two successive fire sources that have exactly
the same closed neighbourhoods does not change the course of fire.
%Go to Step 2.

If such a node $x$ is not available, then we set $(y_1, \ldots, y_{i-1}, x_i = x_{i+1}, \ldots, x_{k-1} = x_k,  x_k = z)$ as a burning sequence for $G_t$, in which $z$ is an unburned node at step
$k-1$. We know that such a node does exist, since by assumption, $b(G_t) = k$. Go to Step 1. Note that, in such a case, we have not yet defined $y_i$, and we only replaced one of the possible options
for $y_i$, that is $x_i$, by the next node in the sequence.

\smallskip

{\bf Step 3.} We have two possibilities for $x_k$; either $x_k \in V(G_{t-1})$, or $x_k \not\in V(G_{t-1})$.

\smallskip

{\bf Step 3.1.} If $x_k \in V(G_{t-1})$, then we set $y_k = x_k$. Return $(y_1, y_2,\ldots, y_k)$.

\smallskip

{\bf Step 3.2.} If $x_k$ is the clone of $x'_k \in V(G_{t})$, then we have two possibilities; either $x'_k = y_{k-1}$, or $x'_k \neq y_{k-1}$.

\smallskip

{\bf Step 3.2.1.} If $x'_k \neq y_{k-1}$, then by Corollary \ref{cone}, we set $y_k = x'_k$. Return $(y_1, y_2,\ldots, y_k)$.

\smallskip

{\bf Step 3.2.2.} If $x'_k = y_{k-1}$, then we set $y_k = x$, where $x$ is an unburned node in $G_{t-1}$ at stage $k-1$, if such a node is available. Return $(y_1, y_2,\ldots, y_k)$ as a burning
sequence for $G_t$, since the fire spread from $y_{k-1} = x'_k$ will burn $x_k$ in the $k$-th step, and by assumption any other unburned node will burn at step $k$. Hence $x_k$ is interchangeable by
any unburned node from stage $k-1$.

If such a node $x$ is not available, then we set $y_k = x_k$, and return $(y_1, y_2,\ldots, y_k)$.

\medskip

Note that Step 2.2.2 of the above algorithm does not generate an infinite loop, and the algorithm ends after finite number of steps. This follows since for burning $G_{t-1}$ we need at least $k-1$
steps. Thus, for any $i \leq k-1$ we always have an unburned node $x$ in $G_{t-1}$ at stage $i-1$. If $x$ is not in the sequence, then it will be chosen as $y_i$ through the steps defined above. If
such a node $x$ is only found in the sequence, then after finite number of performing Step 2.2.2, we choose $x$ as $y_i$, if it is necessary.

Suppose that for every optimal burning sequence $(x_1, x_2,\ldots, x_k)$ of $G_0$ all the neighbours of $x_k$ are burned in the $k$-th step. We claim that $b(G_1) = b(G_0) + 1$. Assume not; that is,
$b(G_1) = b(G_0)$. Let $(y_1, y_2,\ldots, y_k)$ be an optimal burning sequence for $G_1$ that is obtained from an optimal burning sequence $(z_1, z_2, \ldots, z_k)$ for $G_1$ by the algorithm above.
Hence, $\{y_1, \ldots, y_k\}\subseteq G_0$. Otherwise, it implies that $b(G_0) = k-1$, which is a contradiction. But, then to burn $y'_k \in V(G_1)$ (the clone of $y_k$) by the end of the $k$-th
step, one of the nodes in the neighbourhood of $y_k$ must be burned in an earlier stage, which is a contradiction with the assumption. Therefore, in this case $b(G_1) = b(G_0)$ is impossible, and
hence, $b(G_1) = b(G_0) + 1$.

Conversely, suppose that $b(G_1) = b(G_0) + 1$, and $(x_1, x_2,\ldots, x_k)$ is an optimal burning sequence for $G_0$. If $x_k$ has a neighbour that is burned at stage $k-1$, then $x'_k$ is also burned
at stage $k$. Therefore, $(x_1, x_2,\ldots, x_k)$ is also a burning sequence for $G_1$, and we have that $b(G_1) = b(G_0)$, which is a contradiction. Thus, $b(G_1) = b(G_0) + 1$, if and only if for every
optimal burning sequence of $G_0$, say $(x_1, x_2,\ldots, x_k)$, all the neighbours of $x_k$ are burned in stage $k$. By induction, we can conclude that $b(G_t) = b(G_0) + 1$ if and only if for every
optimal burning sequence of $G_0$, say $(x_1, x_2,\ldots, x_k)$, all the neighbours of $x_k$ are burned in stage $k$. Since starting from any graph $G_0$, for any $t\geq 1$, either $b(G_t) = b(G_0)$, or $b(G_t)
= b(G_0) + 1$, we conclude that $b(G_t) = b(G_0)$ if and only if for every optimal burning sequence of $G_0$, say $(x_1, x_2,\ldots, x_k)$ one of the neighbours of $x_k$ is burned at stage $k-1$.
\end{proof}

We finish this section with an example that illustrates Theorem \ref{ilttt}. Let $P_n$ be a path on $n$ nodes such that $\lceil {n}^{1/2}\rceil = k$, for some positive integer $k$. Then by Theorem
\ref{path}, we know that $b(P_n) = k$. Moreover, if $(x_1, x_2,\ldots, x_k)$ is an optimal burning sequence for $P_n$, then burning $P_n$ is equivalent to decomposing $P_n$ into paths of orders at
most $1, 3, \ldots, 2k-1$, in which each path is a rooted path of radius at most $k-i$ and with root $x_i$, for some $1\leq i\leq k$. Thus, we can easily see that $x_k$ is the path of order $1$ in
such a decomposition for $P_n$ in terms of neighbourhoods of $x_i$'s. There are two possibilities for $n$; either $n=k^2$, or $n \neq k^2$.

If $n= k^2$, then it implies that the order of each path in decomposing $P_n$ is exactly equal to $2(k-i)+1$, for some $1\leq i\leq k$. Therefore, the end points of such paths are burned in the
$k$-th steps. Hence, both neighbours or the only neighbour of $x_k$ must burn in the $k$-th step, depending on the position of $x_k$ in $P_n$. Thus, by Theorem \ref{ilttt}, if $G_0 = P_n$ in the ILT
model, then we have that $b(G_t) = b(P_n) + 1 = k+1$, for $t\geq 1$.

On the other hand, if $n\neq k^2$, then, there is at least one $i$ for which the rooted path with root $x_i$ is of order less than $2(k-i) + 1$. That is, one of the end points of this path called $x$
is not burned at the $k$-th step. If in decomposing $P_n$, we choose $x_k$ to be the neighbour of $x$, then we have a burning sequence for $P_n$ such that at least one of the neighbours of $x_k$ is
not burned at step $k$. Therefore, by Theorem \ref{ilttt}, if $G_0 = P_n$ in the ILT model, then we have that $b(G_t) = b(P_n)= k$.

\section{Conclusions and future work}

We introduced a new graph parameter, the burning number of a graph, written $b(G).$ The burning number measures how rapidly social contagion spreads in a given graph. We gave a characterization of
the burning number in terms of decompositions into trees, and gave bounds on the burning number which allow us to compute it for a variety of graphs. We showed the strong connection between the
burning number and the distance domination, that we use it for finding bounds on the burning number, as well as proving the Nordhaus-Gaddum type-results on the burning number of a graph and its
complement. We determined the burning number in the Iterated Local Transitive model for social networks.

Several problems remain on the burning number. We conjecture that for a connected graph $G$ of order $n$, $b(G)\leq \lceil {n}^{1/2}\rceil$. Determining the burning number remains open for many
classes of graphs, including trees and disconnected graphs. It remains open to consider the burning number in real-world social networks such as Facebook or LinkedIn. As Theorem~\ref{ilttt} suggests,
the burning number of on-line social networks is likely of constant order as the network grows over time. We remark that burning number generalizes naturally to directed graphs; one interesting
direction is to determine the burning number on Kleinberg's small world model \cite{klein}, which adds random directed edges to the Cartesian grid.

A simple variation which leads to complex dynamics is to change the rules for nodes to burn. As in graph bootstrap percolation \cite{bbm}, the rules could be varied so nodes burn only if they are
adjacent to at least $r$ burned neighbors, where $r>1.$ We plan on studying this variation in future work.

\end{document}